\newtheorem{thm}{Theorem}[section]
\newtheorem{prop}[thm]{Proposition}
\newtheorem{cor}[thm]{Corollary}
\newtheorem{defn}[thm]{Definition}
\newtheorem{rem}[thm]{Remark}}
\newtheorem{exam}{Example}[section]}
\newcommand{\ra}{\rightarrow}
\newcommand{\dis}{\displaystyle}
\def\R{\mathbb R}
\def\d{\text{\rm{d}}}
\def\E{\mathbb E}
\def\p{\mathbb P}
\def\La{\Lambda}
\def\diag{\mathrm{diag}}
\def\S{\mathcal M}
\def\rsp{\textbf{RSDP} }
\newcommand{\bxi}{\boldsymbol{\xi}}
\newcommand{\bla}{\boldsymbol{\lambda}}
\newcommand{\bbeta}{\boldsymbol{\beta}}
\newcommand{\proj}{\mathscr{I}}
\newcommand{\fin}{\hspace*{\fill}\rule{0.3em}{1ex}}
\newenvironment{proof}{{\bf \noindent Proof.}}{\fin}
\numberwithin{equation}{section}
\begin{document}

\title{Criteria for transience and recurrence of regime-switching diffusion processes\footnote{Supported in
 part by NSFC (No.11301030), NNSFC(11431014), 985-project and Beijing Higher Education Young Elite Teacher Project.}}

\author{Jinghai Shao\footnote{Email:\ shaojh@bnu.edu.cn}\\[0.6cm] {School of Mathematical Sciences, Beijing Normal University, 100875, Beijing, China}}
\maketitle
\begin{abstract}
We provide some criteria for recurrence of regime-switching diffusion processes using the theory of M-matrix and the Perron-Frobenius theorem. State-independent and state-dependent regime-switching diffusion processes in a finite space or in an infinite countable space are all studied in this work. Especially, we put forward a finite partition method to deal with switching processes in an infinite countable space.  As an application, we study the recurrence of regime-switching Ornstein-Uhlenbeck process, and provide a necessary and sufficient condition for a kind of nonlinear regime-switching diffusion processes.
\end{abstract}
AMS subject Classification (2010):\  60A10, 60J60, 60J10   \\
\noindent \textbf{Keywords}: Regime-switching diffusions, M-matrix, Recurrence, Ergodicity

\section{Introduction}
Regime-switching diffusion processes have received much attention lately, and they can provide more realistic formulation for many applications such as biology, mathematical finance, etc. See \cite{CDMR, Gh, GZ, YZ} and references therein for more details on their application.
The regime-switching diffusion process (for short, \textbf{RSDP}) studied in this work can be viewed as a number of diffusion processes modulated by a random switching device or as a diffusion process which lives in a random environment. More precisely, \rsp  is a two-component process $(X_t,\La_t)$, where $(X_t)$ describes the continuous dynamics, and $(\La_t)$ describes the random switching device. $(X_t)$ satisfies the stochastic differential equation (SDE)
\begin{equation}\label{1.1}
\d X_t=b(X_t,\La_t)\d t+\sigma(X_t,\La_t)\d B_t,\ X_0=x\in \R^d,
\end{equation}
where $(B_t)$ is a Brownian motion in $\R^d$, $d\geq 1$, $\sigma$ is $d\times d$-matrix, and $b$ is a vector in $\R^d$.
While $(\La_t)$ is a continuous time Markov chain on the state space $\S=\{1,2,\ldots,N\}$ with $N<\infty$ or $N=\infty$ satisfying
\begin{equation}\label{1.2}
\p(\La_{t+\delta}=l|\La_t=k, X_t=x)=\left\{\begin{array}{ll} q_{kl}(x)\delta+o(\delta), &\text{if}\ k\neq l,\\
                                       1+q_{kk}(x)\delta+o(\delta), & \text{if}\ k=l,
                         \end{array}\right.
\end{equation}
for $\delta>0$.
Throughout this work, the $Q$-matrix $Q_x=(q_{kl}(x))$ is assumed to be irreducible and conservative for each $x\in \R^d$, so $q_k(x)=-q_{kk}(x)=\sum_{l\neq k}q_{kl}(x)<\infty$ for every $x\in\R^d$, $k\in \S$.
If the $Q$-matrix $(q_{kl}(x))$ does not depend on $x$, then $(X_t,\La_t)$ is called a state-independent \textbf{RSDP}; otherwise, it is called a state-dependent one. When $N$ is finite, namely, $(\La_t)$ is a Markov chain on a finite state space, we call $(X_t,\La_t)$ a \rsp  in a finite state space.  When $N$ is infinite, we call $(X_t,\La_t)$ a \rsp in an infinite state space.
Next, we collect some conditions used later.

\vskip 0.3cm
\noindent (H) \  There exists a constant $\bar K>0$ such that
\begin{itemize}
\item[(i)]\ $x\mapsto q_{ij}(x)$ is a bounded continuous function for each pair of $i,j\in \S$.
\item[(ii)]\ $|b(x,i)|+\|\sigma(x,i)\| \leq \bar K(1+|x|),\quad x\in \R^d,\  i\in \S$.
\item[(iii)]\ $|b(x,i)-b(y,i)|+\|\sigma(x,i)-\sigma(y,i)\|\leq \bar K|x-y|,\quad x,\,y\in \R^d,\ i\in \S$.
\item[(iv)]\ For each $i\in \S$, $a(x,i)=\sigma(x,i)\sigma(x,i)^\ast$ is uniformly positive definite.
\end{itemize}
Here and in the sequel, $\sigma^\ast$ stands for the transpose of matrix $\sigma$, and $\|\sigma\|$ denotes the operator norm.
Hypothesis (H-i), (H-ii) and  (H-iii) guarantee the existence of a unique nonexplosive solution of (\ref{1.1}) and (\ref{1.2}) (cf. \cite[Theorem 2.1]{YZ}). Hypothesis (H-iv) is used to ensure that $(X_t,\La_t)$ possesses strong Feller property (cf. \cite{Xi08}, \cite{ZY09}), which will be used in the study of exponential ergodicity.

Corresponding to  the process $(X_t,\La_t)$, there is a family of diffusion processes defined by
\begin{equation}\label{1.3}
\d X_t^{(i)}=b(X_t^{(i)},i)\d t+\sigma(X_t^{(i)},i)\d B_t,
\end{equation}
for each $i\in\S$. These processes $(X_t^{(i)})$ $(i\in\S)$ are the diffusion processes associated with $(X_t,\La_t)$ in each fixed environment.
The recurrent behavior of $(X_t,\La_t)$ is intensively connected with its recurrent behavior in each fixed environment. But this connection is rather complicated as having been noted by \cite{PS}. In \cite{PS}, some examples in $[0,\infty)$ with reflecting boundary at $0$ and $\S=\{1,2\}$ were constructed. They showed that even when $(X_t^{(1)})$ and $(X_t^{(2)})$ are both positive recurrent (transient), $(X_t,\La_t)$ could be transient (positive recurrent, respectively) by choosing suitable transition rate $(q_{ij})$ between two states. In view of this complicatedness, it is a challenging work to determine the recurrent property of a regime-switching diffusion process. There are lots of work having been dedicated to this task. See, for instance, \cite{BGM, BLMZ, CH, PP, PS, YZ} and references therein. Besides constructing the examples we mentioned above, \cite{PS} also studied the reversible state-independent \rsp\!. In \cite{PP}, the author provided a theoretically complete characterization of recurrence and transience for a class of state-independent \rsp\!, which we will state more precisely later. In \cite{CH}, some necessary and sufficient conditions were established to justify the exponential ergodicity of state-independent and state-dependent \rsp\! in a finite state space.  The convergence in total variation norm and in Wasserstein distance were both studied in \cite{CH}. However, the cost function used in \cite{CH} to define the Wasserstein distance is bounded.  All the previously mentioned work considered only the \rsp in a finite state space. Although the general criteria by the Lyapunov functions for Markov processes still work for \rsp\!, it is well known that finding a suitable Lyapunov function is a difficult task for \rsp due to the coexistence of generators for diffusion process and jump process. So it is better to provide some easily verifiable criteria in terms of the coefficients of diffusion process $(X_t)$ and the $Q$-matrix of $(\La_t)$.
In this direction, \cite{ZY09} has provided some criteria for a class of state-dependent \rsp $(X_t,\La_t)$ in a finite state space. Precisely,   the continuous component $(X_t)$ considered in \cite{ZY09} behaves like a linear one  and $Q$-matrix $(q_{ij}(x))$ behaves like a state-independent $Q$-matrix $(\hat q_{ij})$ in a neighborhood of $\infty$.

In \cite{Sh14a}, we studied the ergodicity for \rsp in Wasserstein distance. Both state-independent and state-dependent \rsp  in finite and infinite state spaces are studied in \cite{Sh14a}.  The cost function used in \cite{Sh14a} is not necessarily bounded. We put forward some new criteria for ergodicity based on the theory of M-matrix and Perron-Frobenius theorem.  Our present work is devoted to studying the recurrent property of \rsp in the total variation norm.
Furthermore, in the present work, we also study the recurrence for \rsp in an infinite state space, which is rarely studied before. Based on the criteria given by the M-matrix theory, we put forward a finite partition method (see Theorem \ref{t-3} below).

As an application of our criteria, we develop the study in \cite{PP} and \cite{ZY09}. In \cite{PP}, the authors considered
the state-independent \rsp $(X_t,\La_t)$ in $\R^d\times \S$ with $d\geq 2$ and $\S$ a finite set. For each $i\in \S$, the associated diffusion  $(X_t^{(i)})$ has the infinitesimal generator $L^{(i)}=\frac 12\Delta+\Theta$, where
\begin{equation}\label{V-pp}
\Theta(x,i)=|x|^\delta\hat b(x/|x|,i)\cdot \nabla,\quad \delta\in [-1,1).
\end{equation}
Let $S^{d-1}$ denote the $d-1$-dimension sphere, and $\mu$ be the invariant probability measure for $(\La_t)$. 
In \cite{PP}, they studied the process under the condition  that $\hat b(\phi,i)\not\equiv 0,\ \hat b(\phi,i)\in C^1(S^{d-1})$ for each $i\in \S$, and
\begin{equation}\label{b-pp}
\sum_{i\in \S}\hat b(\phi,i)\mu_i=0\quad \text{for each $\phi\in S^{d-1}$.}
\end{equation}
Condition (\ref{b-pp}) allows them to transform the problem into studying the recurrent behavior of the generator
\[\hat L=r^\gamma\big[c_1(\phi)\frac{\partial^2}{\partial r^2}+\frac{c_2(\phi)}{r}\frac{\partial}{\partial r}+\frac 1r\frac{\partial}{\partial r}D_{S^{d-1}}+\frac 1{r^2}L_{S^{d-1}}\big],
\]
where $\gamma=0$, if $-1\leq \delta\leq 0$, and $\gamma=2\delta$, if $0<\delta<1$, $c_1(\phi)\geq 0$, $D_{S^{d-1}}$ is a first-order operator on $S^{d-1}$ and $L_{S^{d-1}}$ is a (possible degenerate) diffusion generator on $S^{d-1}$. By posing some further conditions on $c_1(\phi)$ and $c_2(\phi)$, they got a quantity $\rho$ expressed in terms of $c_1(\phi)$, $c_2(\phi)$ and the density of invariant probability measure of the process corresponding to $\hat L$. They showed that $(X_t,\La_t)$ is recurrent or transient according to whether $\rho\leq 0$ or $\rho>0$. Theoretically, this result is complete although calculating $\rho$ is a difficult task, which has been pointed out in \cite{PP}. In this work, roughly speaking, we consider the processes corresponding to $\sum_{i\in\S}  \mu_i\hat b_i(\phi,i)\neq 0$.

The usefulness and sharpness of the criteria established in this work can be seen from the following example.
Let $(\La_t)$ be a continuous time Markov chain on $\{1,2,\ldots,N\}$, $N<\infty$, equipped with an irreducible conservative $Q$-matrix $(q_{ij})$.  Let  $\mu$ be the invariant probability measure of $(\La_t)$. Let $(X_t)$ be a random diffusion on $[0,\infty)$ with reflecting boundary at $0$ satisfying
\[\d X_t=b_{\La_t}X_t^\delta\d t+\d B_t, \quad \delta\in [-1,1].\]
In the case $\delta\in [-1,1)$, if $\sum_{i=1}^N\mu_ib_i\leq 0$, then $(X_t,\La_t)$ is recurrent; if $\sum_{i=1}^N\mu_ib_i>0$, then $(X_t,\La_t)$ is transient.
In the case $\delta=1$, if $\sum_{i=1}^N\mu_ib_i<0$, then $(X_t,\La_t)$ is exponentially ergodic; if $\sum_{i=1}^N\mu_i b_i>0$, then $(X_t,\La_t)$ is transient. Note that the case $\sum_{i=1}^N \mu_i b_i=0$ has been studied by Corollary 2 of \cite{PP} and Remark 2 following it.

This work is organized as follows. In Section 2, we provide the first type of criteria for recurrence of \textbf{RSDP}, which uses a common function to measure the recurrent behaviour of \rsp in each fixed environment. Then these criteria are applied to study the recurrence of regime-switching Ornstein-Uhlenbeck processes. In Section 3, we provide the second type of criteria, which uses a couple of functions to measure the recurrent behaviour of \rsp in each fixed environment. Then we apply these criteria to study the processes considered in \cite{PP}.

\section{Criteria for recurrence and transience: I}

Let $(X_t,\La_t)$ be defined by (\ref{1.1}) and (\ref{1.2}). Its corresponding diffusion $(X_t^{(i)})$ in each fixed environment $i\in\S$
is defined by (\ref{1.3}), and the generator $L^{(i)}$  of $(X_t^{(i)})$ is given by
\[L^{(i)}=\frac 12\sum_{k,l=1}^d a^{(i)}_{kl}(x)\frac{\partial^2}{\partial x_k\partial x_l}+\sum_{k=1}^d b_k^{(i)}(x)\frac{\partial}{\partial x_k},\]
where
$a^{(i)}(x)=\sigma(x,i)\sigma(x,i)^\ast$, $b^{(i)}(x)=b(x,i)$.
For a vector $\bbeta=(\beta_1,\ldots,\beta_N)^\ast$, we use $\diag(\bbeta)=\diag(\beta_1,\ldots,\beta_N)$ to denote the diagonal matrix generated by  $\bbeta$ as usual.

Our first type of criteria for recurrence of \rsp uses a common function $V\in C^2(\R^d)$ to measure the recurrent property of the corresponding diffusion in each fixed environment. Then combine it  with the recurrent behavior of Markov chain to determine the recurrent property of \rsp $(X_t,\La_t)$. Let  $V\in C^2(\R^d)$  satisfy  the following condition:
\begin{itemize}
\item[(A1)] There exist constants $r_0>0$ and $\beta_i\in \R$, $i\in\S$ such that
\begin{equation*}\label{cond-L}
V(x)>0,\qquad L^{(i)} V(x)\leq \beta_i V(x),\quad |x|> r_0.
\end{equation*}
\end{itemize}
Here the constant $\beta_i$ could be negative or positive, which represents the recurrent behavior of $(X_t^{(i)})$ under the measurement tool $V$. Then using the Perron-Frobenius theorem, we get our first criterion for recurrence of state-independent \rsp in a finite state space.

\begin{thm}\label{t-2}
Let $(X_t,\La_t)$ be a state-independent \rsp defined by (\ref{1.1}), (\ref{1.2}) with $N<\infty$.
Assume that (H) holds and there exists a function $V\in C^2(\R^d)$ such that condition (A1) holds and
\begin{equation}\label{mu-beta}
\sum_{i\in\S}\mu_i\beta_i<0,
\end{equation} where  $\mu=(\mu_i)_{i\in \S}$ is the invariant probability measure of $(\La_t)$.
Then $(X_t,\La_t)$ is transient if\, $\lim_{|x|\ra \infty} V(x)=0$, and is exponentially ergodic if \,$\lim_{|x|\ra \infty} V(x)=\infty$.
\end{thm}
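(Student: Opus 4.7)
My plan is to build a Lyapunov function of the product form $\bar V(x,i) = \xi_i V(x)$, where the positive weights $\bxi = (\xi_i)_{i\in\S}$ encode the switching dynamics through the Perron-Frobenius theorem, and then feed the resulting drift inequality into standard Foster--Lyapunov machinery.

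The first step is spectral. The matrix $A := Q + \diag(\bbeta)$ has non-negative off-diagonal entries and is irreducible because $Q$ is, so Perron-Frobenius provides a real principal eigenvalue $\eta = \eta_{\max}(A)$ (the spectral abscissa) together with a strictly positive right eigenvector $\bxi > 0$ satisfying $A\bxi = \eta\,\bxi$. Since the invariant measure $\mu$ of $(\La_t)$ satisfies $\mu Q = 0$, left multiplication of $A\bxi = \eta\bxi$ by $\mu$ gives
\[ \eta \sum_{i\in \S} \mu_i \xi_i \;=\; \sum_{i\in \S} \mu_i \beta_i \xi_i, \]
so the assumption $\sum_i \mu_i \beta_i < 0$ is the key sign input; combined with the M-matrix arguments along the lines of \cite{Sh14a}, it will yield $\eta < 0$, equivalently that $-A$ is a nonsingular M-matrix.

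With $\bxi$ and $\eta<0$ in hand, a direct computation using (A1) gives, for $|x|>r_0$,
\[ \mathcal L\bar V(x,i) \;=\; \xi_i L^{(i)}V(x) + V(x)(Q\bxi)_i \;\leq\; V(x)\bigl[\xi_i \beta_i + (Q\bxi)_i\bigr] \;=\; V(x)(A\bxi)_i \;=\; \eta\,\bar V(x,i), \]
so $\mathcal L \bar V \leq \eta\,\bar V$ outside the compact set $\{|x|\leq r_0\}$. If $\lim_{|x|\to\infty}V(x)=\infty$, then $\bar V$ is a norm-like Lyapunov function on $\R^d\times \S$, and the drift condition, together with the irreducibility and strong Feller property supplied by (H-iv) (cf.\ \cite{Xi08, ZY09}), yields exponential ergodicity in total variation by a standard Meyn--Tweedie-type theorem. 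If $\lim_{|x|\to\infty}V(x)=0$, then $\bar V(X_t,\La_t)$ is a nonnegative local supermartingale up to the first entry time into $\{|x|\leq r_0\}$; optional stopping together with the lower bound $\inf_{|x|=r_0,\,i\in \S}\bar V(x,i)>0$ shows that, when $V(x)$ is sufficiently small, the process escapes to infinity with positive probability, which gives transience. The main obstacle I anticipate is the M-matrix step in the second paragraph, namely translating the averaged sign condition $\sum_i \mu_i \beta_i < 0$ into negativity of the Perron-Frobenius eigenvalue of $Q+\diag(\bbeta)$; once that spectral input is available, the remaining arguments are routine.
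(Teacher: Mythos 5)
Your overall architecture (positive Perron--Frobenius weights $\xi_i$, product Lyapunov function, Foster--Lyapunov conclusion) is the right one, but the spectral step you flag as ``the main obstacle'' is not just an obstacle --- as you have set it up, it is false. The condition $\sum_i\mu_i\beta_i<0$ does \emph{not} imply that the principal eigenvalue $\eta$ of $A=Q+\diag(\bbeta)$ is negative. Your own identity shows why: left-multiplying $A\bxi=\eta\bxi$ by $\mu$ gives $\eta\sum_i\mu_i\xi_i=\sum_i\mu_i\beta_i\xi_i$, and the right-hand side is the $\beta_i$ averaged against the weights $\mu_i\xi_i$, not against $\mu_i$; since $\bxi$ is the unknown eigenvector, its components can tilt the average toward the positive $\beta_i$'s. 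Concretely, take $\S=\{1,2\}$, $q_{12}=q_{21}=1$ (so $\mu=(\tfrac12,\tfrac12)$), $\beta_1=-10$, $\beta_2=1$: then $\sum_i\mu_i\beta_i=-4.5<0$, yet the largest eigenvalue of $A=\begin{pmatrix}-11&1\\1&0\end{pmatrix}$ is $\tfrac{-11+\sqrt{125}}{2}>0$, so $-A$ is not a nonsingular M-matrix and your drift inequality $\mathcal L\bar V\le \eta\bar V$ with $\eta<0$ is unavailable.

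The paper's proof repairs exactly this point by introducing a small exponent $p$: it works with $Q_p=Q+p\,\diag(\bbeta)$ and the Lyapunov function $f(x,i)=V(x)^p\xi_i$. The map $p\mapsto(\text{principal eigenvalue of }Q_p)$ vanishes at $p=0$ and has derivative $\sum_i\mu_i\beta_i$ there (the $p=0$ eigenvector is constant, so the average really is against $\mu$); hence the hypothesis \eqref{mu-beta} forces $\eta_p>0$ for all sufficiently small $p>0$ (this is Proposition 4.2 of \cite{BGM}), even though it may fail at $p=1$. The price is that one must check $L^{(i)}V^p\le p\beta_iV^p$, which holds for $p\in(0,1)$ because the second-order term contributes $\tfrac{p(p-1)}{2}V^{p-2}\,|\sigma^\ast\nabla V|^2\le 0$. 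With that modification your remaining steps (the drift computation, the Meyn--Tweedie conclusion in the ergodic case, and the supermartingale/optional stopping argument for transience) go through as you describe. To fix your write-up, replace the claim ``$\sum_i\mu_i\beta_i<0$ yields $\eta<0$'' by the small-$p$ perturbation argument and switch the spatial factor from $V$ to $V^p$.
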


\begin{proof}
Let $Q_p=Q+p\,\diag(\bbeta)$, $p>0$, and
\[\eta_p=-\max_{\gamma\in \mathrm{spec}(Q_p)}\mathrm{Re}\,\gamma, \quad \text{where $\mathrm{spec}(Q_p)$ denotes the spectrum of $Q_p$}.\]
Let $Q_{(p,t)}=e^{tQ_p}$, then the spectral radius $\mathrm{Ria}(Q_{(p,t)})$ of $Q_{(p,t)}$ equals to $e^{-\eta_p t}$. Since all coefficients of $Q_{(p,t)}$ are positive (see the argument of \cite[Proposition 4.1]{BGM} for details), the Perron-Frobenius theorem (see \cite[Chapter 2]{BP}) yields $-\eta_p$ is a simple eigenvalue of $Q_p$. Moreover, note that the eigenvector of $Q_{(p,t)}$ corresponding to $e^{-\eta_p t}$ is also an eigenvector of $Q_p$ corresponding to $-\eta_p$. Then Perron-Frobenius theorem ensures that there exists an eigenvector $\bxi\gg 0$ of $Q_p$ associated with the eigenvalue $-\eta_p$. Now applying Proposition 4.2 of \cite{BGM} (by replacing $A_p$ there with $Q_p$ and changing the sign of $p$), if $\sum_{i=1}^N\mu_i\beta_i<0$, then there exists some $p_0>0$ such that $\eta_p>0$ for any $0<p<p_0$. Fix a $p$ with $0<p<\min\{1,p_0\}$ and an eigenvector $\bxi\gg 0$, then we obtain
\[Q_p\,\bxi=(Q+p\,\diag(\bbeta ))\bxi=-\eta_p\,\bxi\ll 0.\]
Put $f(x,i)=V(x)^p\xi_i$, $x\in \R^d$, $i\in\S$. For $|x|>r_0$, $i\in \S$, due to \cite{Sko},
\begin{equation}\label{e-1}
\begin{split}
\mathscr A f(x,i)&=Q\bxi(i) V(x)^p+\xi_iL^{(i)}V(x)^p\\
&\leq \big(Q\bxi(i)+p\beta_i\xi_i\big)V(x)^p\\ &=-\eta_p\,\xi_iV(x)^p=-\eta_p f(x,i).
\end{split}
\end{equation}
Therefore, according to the Foster-Lyapunov drift conditions (cf. \cite[Section 2, p.443]{PP} or \cite[Theorem 3.26]{YZ}),  we obtain that
$(X_t,\La_t)$ is positive recurrent if $\lim_{|x|\ra \infty} V(x)=\infty$, and $(X_t,\La_t)$ is transient if $\lim_{|x|\ra \infty}V(x)=0$. Moreover, according to \cite[Theorem 5.1]{Xi08}, inequality (\ref{e-1}) yields that $(X_t,\La_t)$ is exponentially ergodic.
\end{proof}
\begin{rem} We give a heuristic explanation of the condition (\ref{mu-beta}) in previous theorem.
As $\mu$ is the invariant probability measure of $(\La_t)$, $\mu_i$ represents in some sense the time ratio spent by $(\La_t)$ in the state $i$. $\beta_i$ represents the recurrent behavior of $(X_t^{(i)})$. Therefore, the quantity $\sum_{i\in\S}\mu_i\beta_i$ averages the recurrent behavior of $(X_t^{(i)})$ with respect to $\mu$, which determine the recurrent behavior of $(X_t,\La_t)$ according to previous theorem.
\end{rem}

Next, we shall use the theory of M-matrix to provide a criterion on recurrence of state-independent \rsp in a finite state space. This criterion can be extended to deal with state-dependent \rsp in a finite state space or state-independent \rsp in an infinite state space.
Let us introduce some notation and basic properties on M-matrix.  We refer the reader to \cite{BP} for more discussion on this topic.

Let $B$ be a matrix or vector. By $B\geq 0$ we mean that all elements of $B$ are non-negative.  By $B\gg 0$, we mean that all elements of $B$ are positive.
\begin{defn}[M-matrix] A square matrix $A=(a_{ij})_{n\times n}$ is called an M-Matrix if $A$ can be expressed in the form $A=sI-B$ with some $B\geq 0$ and $s\geq\mathrm{Ria}(B)$, where $I $ is the $n\times n$ identity matrix, and $\mathrm{Ria}(B)$ the spectral radius of $B$. When $s>\mathrm{Ria}(B)$,   $A$ is called a nonsingular M-matrix.
\end{defn}
We cite some conditions equivalent to that $A$ is a nonsingular M-matrix as follows, and refer to  \cite{BP} for more discussion on this topic.
\begin{prop}[\cite{BP}]\label{m-matrix}
The following statements are equivalent.
\begin{enumerate}
\item $A$ is a nonsingular $n\times n$ M-matrix.
\item All of the principal minors of $A$ are positive; that is,
\[\begin{vmatrix} a_{11}&\ldots&a_{1k}\\ \vdots& &\vdots\\ a_{1k}&\ldots&a_{kk}\end{vmatrix}>0 \ \  \text{for every $k=1,2,\ldots,n$}.\]
\item Every real eigenvalue of $A$ is positive.
\item $A$ is semipositive; that is, there exists $x\gg 0$ in $\R^n$ such that $Ax\gg0$.
\end{enumerate}
\end{prop}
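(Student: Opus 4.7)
The plan is to prove the equivalences cyclically as $(1) \Rightarrow (2) \Rightarrow (3) \Rightarrow (4) \Rightarrow (1)$. Throughout I exploit the representation $A = sI - B$ with $B \geq 0$ built into the M-matrix definition, so that $A$ is automatically a $Z$-matrix: the off-diagonal entries $a_{ij} = -b_{ij}$ are non-positive for $i \neq j$. This structural fact is essential for the equivalence of (4) with the other three.

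For $(1) \Rightarrow (2)$, every principal submatrix $A_I$ (indexed by $I \subseteq \{1,\dots,n\}$) inherits the form $sI_{|I|} - B_I$ with $B_I$ the corresponding submatrix of $B \geq 0$. Monotonicity of the Perron root on non-negative matrices gives $\mathrm{Ria}(B_I) \leq \mathrm{Ria}(B) < s$, so each $A_I$ is itself a nonsingular M-matrix, and $\det A_I = \prod(s-\mu)$ over its eigenvalues $\mu$ is strictly positive: real $\mu$ satisfy $\mu \leq \mathrm{Ria}(B_I) < s$ and contribute positive factors, while non-real $\mu$ come in conjugate pairs contributing $|s-\mu|^2 > 0$. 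The step $(2) \Rightarrow (3)$ reduces to a sign analysis of the characteristic polynomial
\[
\det(\lambda I - A) = \sum_{k=0}^n (-1)^k E_k \lambda^{n-k}, \qquad E_0 = 1,
\]
where $E_k$ is the sum of all $k \times k$ principal minors; at $\lambda = -t$ with $t \geq 0$ this equals $(-1)^n \sum_{k=0}^n E_k t^{n-k} \neq 0$ under (2), ruling out non-positive real roots.

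For $(3) \Rightarrow (4)$, the Perron-Frobenius theorem supplies $\mathrm{Ria}(B)$ as an eigenvalue of $B$, hence $s - \mathrm{Ria}(B)$ is a real eigenvalue of $A$, positive by (3). Therefore $s > \mathrm{Ria}(B)$ and the Neumann series $A^{-1} = s^{-1}\sum_{k \geq 0}(B/s)^k$ converges with $A^{-1} \geq 0$. For any $y \gg 0$, set $x = A^{-1} y \geq 0$; if some $x_i = 0$, the $Z$-matrix structure forces $(Ax)_i = \sum_{j \neq i} a_{ij} x_j \leq 0$, contradicting $(Ax)_i = y_i > 0$, so $x \gg 0$. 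Finally, $(4) \Rightarrow (1)$ uses the non-negative left Perron eigenvector $u$ of $B$: from $Ax = sx - Bx \gg 0$ with $x \gg 0$, multiplication by $u^\ast$ gives $(s - \mathrm{Ria}(B))\, u^\ast x = u^\ast(Ax) > 0$, and $u^\ast x > 0$ (since $u \neq 0$ and $x \gg 0$) then forces $s > \mathrm{Ria}(B)$.

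The main obstacle I expect is the step $(3) \Rightarrow (4)$, specifically the upgrade from $A^{-1} y \geq 0$ to $A^{-1} y \gg 0$: this is exactly where the $Z$-matrix structure of $A$ is indispensable. Without that tacit hypothesis, semipositivity would be strictly weaker than the nonsingular M-matrix property (for instance $I + E_{12}$ is semipositive but has a positive off-diagonal entry, so it is not even a $Z$-matrix). The other implications are routine once one has Perron-Frobenius, monotonicity of the spectral radius on non-negative matrices, and the expansion of the characteristic polynomial in terms of principal minors.
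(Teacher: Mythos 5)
The paper gives no proof of this proposition: it is quoted verbatim from Berman and Plemmons \cite{BP} and used as a black box, so there is no in-paper argument to compare against. Your cyclic proof $(1)\Rightarrow(2)\Rightarrow(3)\Rightarrow(4)\Rightarrow(1)$ is correct and is essentially the standard argument from that reference: monotonicity of the Perron root on principal submatrices for $(1)\Rightarrow(2)$, the sign pattern of the characteristic polynomial $\det(\lambda I-A)=\sum_{k}(-1)^kE_k\lambda^{n-k}$ evaluated at $\lambda=-t\le 0$ for $(2)\Rightarrow(3)$, the Neumann series $A^{-1}=s^{-1}\sum_{k\ge 0}(B/s)^k\ge 0$ for $(3)\Rightarrow(4)$, and pairing against the left Perron vector of $B$ for $(4)\Rightarrow(1)$ all check out, as does the bootstrapping from $A^{-1}y\ge 0$ to $A^{-1}y\gg 0$ via the non-positive off-diagonal entries. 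You also correctly identified the one genuine subtlety in the statement as printed: conditions (2)--(4) are equivalent to (1) only under the standing hypothesis that $A$ is a $Z$-matrix (off-diagonal entries non-positive), which \cite{BP} imposes explicitly but the paper leaves implicit; your counterexample $I+E_{12}$ shows the hypothesis cannot be dropped. In the paper's applications this is harmless, since every matrix to which the proposition is applied is of the form $-(Q+\diag(\bbeta))$ or $-(Q+\diag(\bbeta))H_N$ and the relevant direction used is $(1)\Rightarrow(4)$, but your proof is the honest version of the statement.
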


\begin{thm}\label{t-1}
Let $(X_t,\La_t)$ be a state-independent \rsp in $\S$ with $N<\infty$.
Assume that (H) holds and there exists a function $V\in C^2(\R^d)$ such that condition (A1) is satisfied and the matrix $-\big(Q+\diag(\bbeta )\big)$ is a nonsingular M-matrix. Then $(X_t,\La_t)$ is exponentially ergodic if\, $\lim_{|x|\ra\infty} V(x)=\infty$ and is transient if \,$\lim_{|x|\ra \infty}V(x)=0$.
\end{thm}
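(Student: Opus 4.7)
The plan is to mimic the proof of Theorem \ref{t-2}, replacing the Perron--Frobenius construction of an eigenvector by the semipositivity characterization of nonsingular M-matrices (Proposition \ref{m-matrix}(4)). Because the M-matrix hypothesis is stated directly at the unperturbed level $-(Q+\diag(\bbeta))$, no auxiliary parameter $p\in(0,1)$ should be needed: a Lyapunov function of the form $f(x,i)=V(x)\xi_i$, rather than $V(x)^p\xi_i$ as in Theorem \ref{t-2}, should do the job.

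First, I would invoke Proposition \ref{m-matrix}(4) applied to $-(Q+\diag(\bbeta))$ to obtain a vector $\bxi\gg 0$ with $-(Q+\diag(\bbeta))\bxi\gg 0$, i.e., $Q\bxi(i)+\beta_i\xi_i<0$ for every $i\in\S$. Finiteness of $\S$ then furnishes a uniform $\eta>0$ such that $Q\bxi(i)+\beta_i\xi_i\leq-\eta\xi_i$ for all $i\in\S$. Setting $f(x,i)=V(x)\xi_i$ and using $V>0$, $\xi_i>0$, and condition (A1), a direct computation on the extended generator $\mathscr A$ of $(X_t,\La_t)$ yields, for $|x|>r_0$,
\[
\mathscr A f(x,i)=Q\bxi(i)\,V(x)+\xi_i L^{(i)}V(x)\leq\big(Q\bxi(i)+\beta_i\xi_i\big)V(x)\leq-\eta\, f(x,i),
\]
which is the direct analogue of the drift bound derived in the proof of Theorem \ref{t-2}.

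With this Foster--Lyapunov drift inequality in hand, the conclusions follow along the lines of Theorem \ref{t-2}. When $\lim_{|x|\to\infty}V(x)=\infty$, positivity of each $\xi_i$ makes $f(\cdot,i)$ coercive, so \cite[Theorem 3.26]{YZ} yields positive recurrence, and \cite[Theorem 5.1]{Xi08} combined with the strong Feller property guaranteed by (H-iv) upgrades this to exponential ergodicity; when $\lim_{|x|\to\infty}V(x)=0$, the transience Foster--Lyapunov criterion (see \cite[Section 2, p.\,443]{PP}) applies directly to $f$. The only mild obstacle I anticipate is the uniform-in-$i$ choice of $\eta$, which is exactly where finiteness of $N$ enters the argument; conceptually, the M-matrix hypothesis is engineered precisely to furnish the strictly positive coordinatewise vector $\bxi$ that builds a joint Lyapunov function for $(X_t,\La_t)$, without any power rescaling of $V$.
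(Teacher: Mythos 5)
Your proposal is correct and follows essentially the same route as the paper's own proof: it invokes the semipositivity characterization of nonsingular M-matrices to produce $\bxi\gg 0$ with $-(Q+\diag(\bbeta))\bxi\gg 0$, builds the Lyapunov function $f(x,i)=V(x)\xi_i$, and uses finiteness of $N$ to extract the uniform drift rate $\min_i(\lambda_i/\xi_i)>0$ before concluding as in Theorem \ref{t-2}. No substantive differences.
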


\begin{proof}
Denote by $\mathscr A$ the generator of $(X_t,\La_t)$. Due to \cite{Sko},
\[\mathscr A f(x,i)=L^{(i)}f(\cdot,i)(x)+Qf(x,\cdot)(i),\]
where $Qg(i)=\sum_{j\neq i}q_{ij}(g_j-g_i)$ for $g\in \mathscr B(\S)$. As $-\big(Q+\diag(\bbeta)\big)$ is a nonsingular M-matrix, by Proposition \ref{m-matrix}, there exists a vector $\bxi=(\xi_1,\ldots,\xi_N)^\ast\gg 0$ such that
\[\boldsymbol{\lambda}=(\lambda_1,\ldots,\lambda_N)^\ast=-\big(Q+\diag(\bbeta)\big)\bxi\gg 0.\]
Take $f(x,i)=V(x)\xi_i,\quad x\in \R^d,\, i\in \S$, then for $|x|>r_0$, $i\in \S$,
\begin{equation}\label{ine-1}
\begin{split}
\mathscr A f(x,i)=& Q \bxi(i) V(x)+\xi_iL^{(i)} V(x)\\
      &\leq \big(Q\bxi(i)+\beta_i\xi_i\big)V(x) =-\lambda_i V(x)\\
      &=-\frac{\lambda_i}{\xi_i} f(x,i) \leq -\min_{1\leq i\leq N}\Big(\frac{\lambda_i}{\xi_i}\Big) f(x,i).
\end{split}
\end{equation}
As $N<\infty$, we have $\min_{1\leq i\leq N}(\lambda_i/\xi_i)>0$.
Then analogous to the argument of Theorem \ref{t-2}, we can conclude the proof.
\end{proof}

Now we proceed to study the recurrence of the state-dependent \rsp in a finite state space.
To this aim, we need to introduce an auxiliary Markov chain $(\tilde \La_t)$ on $\S$ with a conservative $Q$-matrix defined by:
\begin{equation}\label{tilde-q}
\tilde q_{ik} =\left\{\begin{array}{ll}\sup_{x\in\R^d}   q_{ik}(x) \ &\text{if $k<i$},\\
  \inf_{x\in \R^d} q_{ik}(x) \ &\text{if $k>i$},
 \end{array}\right.\ \ \text{and}\ \tilde q_{ii} =-\sum_{k\neq i}\tilde q_{ik}, \quad i\in \S.
\end{equation}

\begin{thm}\label{t-finite}
  Let $(X_t,\La_t)$ be a state-dependent \rsp in $\S$ with $N<\infty$. Assume that (H) holds, and there exists a function $V\in C^2(\R^d)$ such that condition (A1) is satisfied and the matrix $-\big(\tilde Q+\diag(\bbeta )\big)H_N$ is a nonsingular M-matrix, where $\tilde Q=(\tilde q_{ij})$ is defined by (\ref{tilde-q}) and
  \begin{equation}
    H_N=\begin{pmatrix}
      1&1&1&\cdots&1\\
      0&1&1&\cdots&1\\
      0&0&1&\cdots&1\\
      \vdots&\vdots&\vdots&\cdots&\vdots\\
     0&0&0&\cdots&1
      \end{pmatrix}_{N\times N}.
  \end{equation}
  Then $(X_t,\La_t)$ is transient if\, $\lim_{|x|\ra \infty} V(x)=0$ and is exponentially ergodic if\, $\lim_{|x|\ra\infty} V(x)=\infty$.
\end{thm}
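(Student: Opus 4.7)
The plan is to mimic the proof of Theorem \ref{t-1}, but to accommodate the state-dependence of $Q_x$ by exploiting a monotonicity trick encoded in the auxiliary matrix $H_N$. Since $-(\tilde Q+\diag(\bbeta))H_N$ is a nonsingular M-matrix, Proposition \ref{m-matrix}(4) supplies a vector $\bxi=(\xi_1,\ldots,\xi_N)^\ast\gg 0$ with
\[
\bla:=-\bigl(\tilde Q+\diag(\bbeta)\bigr)H_N\bxi\gg 0.
\]
I would then set $\boldsymbol{\eta}=H_N\bxi$, so that $\eta_i=\sum_{j=i}^N\xi_j$. The crucial structural observation is that $\boldsymbol{\eta}\gg 0$ and, more importantly, $\eta_1\geq \eta_2\geq\cdots\geq \eta_N=\xi_N>0$, i.e.\ $\boldsymbol{\eta}$ is strictly positive and monotone nonincreasing.

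Next I would take the Lyapunov candidate $f(x,i)=V(x)\eta_i$ and compute $\mathscr A f(x,i)$ for $|x|>r_0$. Using conservativeness of $Q_x$ and (A1),
\[
\mathscr A f(x,i)\leq V(x)\sum_{j\neq i}q_{ij}(x)(\eta_j-\eta_i)+\beta_i\eta_iV(x).
\]
Here the monotonicity of $\boldsymbol{\eta}$ pays off: $\eta_j-\eta_i\geq 0$ for $j<i$ while $\eta_j-\eta_i\leq 0$ for $j>i$. The definition (\ref{tilde-q}) of $\tilde q_{ij}$ (sup for $j<i$, inf for $j>i$) was engineered precisely so that, term by term, $q_{ij}(x)(\eta_j-\eta_i)\leq \tilde q_{ij}(\eta_j-\eta_i)$. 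Summing yields the pointwise upper bound
\[
\sum_{j\neq i}q_{ij}(x)(\eta_j-\eta_i)\leq (\tilde Q\boldsymbol{\eta})_i,
\]
hence
\[
\mathscr A f(x,i)\leq \bigl[(\tilde Q+\diag(\bbeta))H_N\bxi\bigr]_iV(x)=-\lambda_iV(x)\leq -\Bigl(\min_{1\leq i\leq N}\frac{\lambda_i}{\eta_i}\Bigr)f(x,i).
\]
Since $N<\infty$ the constant $c:=\min_i\lambda_i/\eta_i$ is strictly positive, giving the drift inequality $\mathscr A f\leq -c f$ outside a compact set.

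From here the argument concludes exactly as in the proofs of Theorems \ref{t-2} and \ref{t-1}: the Foster–Lyapunov drift criterion (cf.\ \cite[Theorem 3.26]{YZ}) gives transience when $\lim_{|x|\to\infty}V(x)=0$, while when $\lim_{|x|\to\infty}V(x)=\infty$ the exponential drift inequality together with \cite[Theorem 5.1]{Xi08} (whose applicability is ensured by (H-iv)) yields exponential ergodicity.

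The main obstacle is the monotonicity trick itself: one must recognise that the role of $H_N$ is to convert a positive vector $\bxi$ into a monotone nonincreasing vector $\boldsymbol{\eta}$, since only for such $\boldsymbol{\eta}$ can the state-dependent quantity $\sum_j q_{ij}(x)\eta_j$ be dominated uniformly in $x$ by $(\tilde Q\boldsymbol{\eta})_i$ with the extremal rates prescribed in (\ref{tilde-q}). Once the correct pairing of $H_N$ with the definition of $\tilde Q$ is identified, the rest is essentially the same M-matrix/Lyapunov argument as in the state-independent case.
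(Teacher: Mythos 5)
Your proposal is correct and follows essentially the same route as the paper's own proof: use Proposition \ref{m-matrix}(4) to extract a positive vector, apply $H_N$ to obtain a strictly decreasing positive vector, exploit the sup/inf structure of $\tilde Q$ to dominate $Q_x$ termwise, and conclude via the Lyapunov function $f(x,i)=V(x)\eta_i$ as in Theorems \ref{t-2} and \ref{t-1}. The only difference is notational (your $\bxi$ and $\boldsymbol{\eta}$ are the paper's $\eta$ and $\xi$, respectively).
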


\begin{proof}
Since $-(\tilde Q+\diag(\bbeta))H_N$ is a nonsingular M-matrix, by Proposition \ref{m-matrix}, there exists a vector $\eta\gg 0$ such that
\[\bm{\lambda}=-(\tilde Q+\diag(\bbeta))H_N\eta\gg 0.\]
Set $\xi=H_N\eta$, then
\[\xi_i=\eta_i+\cdots+\eta_N\quad \text{for}\ i=1,\ldots,N.\]
The strict positiveness of $\eta$ implies that $\xi_{i+1}<\xi_i$ for $i=1,\ldots,N-1$, and $\xi\gg 0$. By the definition of $(\tilde q_{ij})$, we obtain that for every $i\in \S$, $x\in\R^d$,
\begin{align*}
  Q_x\xi(i)&=\sum_{j>i}q_{ij}(x)(\xi_j-\xi_i)+\sum_{j<i}q_{ij}(x)(\xi_j-\xi_i)\\
  &\leq \sum_{j>i}\tilde q_{ij} (\xi_j-\xi_i)+\sum_{j<i}\tilde q_{ij}(\xi_j-\xi_i).
\end{align*}
Consequently, by setting $f(x,i)=V(x)\xi_i$ for $x\in\R^d$, $i\in\S$, we get
\begin{align*}
  \mathscr Af(x,i)&=Q_x\xi(i) V(x)+\xi_iL^{(i)} V(x)\\
  &\leq \big(\tilde Q \xi(i)+\beta_i\xi_i\big)V(x)=-\lambda_i V(x)\leq 0.
\end{align*}
  Then analogous to the argument of Theorem \ref{t-2}, we can conclude the proof.
\end{proof}

Now we extend Theorem \ref{t-1} to deal with state-independent \rsp in an infinite state space.
Let $V\in C^2(\R^d)$ such that (A1) holds and $\bar K=\sup_{i\in \S}\beta_i<\infty$.
As the M-matrix theory is about matrices with finite size, we shall put forward a finite partition method to transform the \rsp in an infinite state space into a new \rsp in  a finite state space. Let
\[\Gamma=\{-\infty=k_0<k_1<\ldots<k_{m-1}<k_m=\bar K\}\] be a finite partition of $(-\infty, \bar K]$. Corresponding to $\Gamma$, there exists a finite partition
$F=\{F_1,\ldots,F_m\}$ of $\S$ defined by
\[F_i=\{j\in \S;\ \beta_j\in (k_{i-1},k_i]\},\quad i=1,2,\ldots,m.\]
We assume each $F_i$ is nonempty, otherwise, we can delete some points in the partition $\Gamma$.
Set
\begin{gather}\label{para-F}
\beta_i^F=\sup_{j\in F_i} \beta_j,\quad q_{ii}^F=-\sum_{k\neq i}q_{ik}^F,\\
q_{ik}^F=\left\{\begin{array}{ll} \sup_{r\in F_i}\sum_{j\in F_k} q_{rj},\ &\text{if $k<i$},\\
  \inf_{r\in F_i}\sum_{j\in F_k}q_{rj},\ &\text{if $k>i$}.
 \end{array}\right.
\end{gather}
Then \[\beta_j\leq \beta_i^F, \ \forall\, j\in F_i,\  \text{and}\ \beta_{i-1}^F<\beta_i^F,\ i=2,\ldots,m.\]
After doing these preparation, we can get the following result.
\begin{thm}\label{t-3}
Let $(X_t,\La_t)$ be a state-independent \rsp in $\S$ with $N=\infty$.
Assume that (H) holds and $(\La_t)$ is recurrent. Let $V\in C^2(\R^d)$ such that (A1) is satisfied and $\bar K=\sup_{i\in \S}\beta_i<\infty$. Define the partition $\Gamma$ and the corresponding vector $(\beta_i^F)$, finite matrix $Q^F$ as above. Suppose that the $m\times m$ matrix $-\big(\diag(\beta_1^F,\ldots,\beta_m^F)+Q^F\big)H_m$ is a nonsingular M-matrix,
where
\begin{equation}\label{h-matrix}
H_m=\begin{pmatrix}
1&1&1&\cdots&1\\
0&1&1&\cdots&1\\
0&0&1&\cdots&1\\
\vdots&\vdots&\vdots&\cdots&\vdots\\
0&0&0&\cdots&1
\end{pmatrix}_{m\times m}.
\end{equation}
Then the process $(X_t,\La_t)$ is recurrent if\, $\lim_{|x|\ra \infty}V(x)=\infty$ and is transient if \,$\lim_{|x|\ra \infty} V(x)=0$.
\end{thm}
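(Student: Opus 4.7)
The plan is to mimic Theorem~\ref{t-finite}: produce a strictly decreasing positive vector on $\{1,\ldots,m\}$ from the M-matrix hypothesis, then \emph{lift} it to a bounded positive function on the whole infinite state space $\S$ via the partition $F$. The sup/inf entries defining $Q^F$ in (\ref{para-F}) are designed precisely so that this extension preserves the supersolution inequality $\mathscr{A}f\leq 0$ outside a ball.

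First, apply Proposition~\ref{m-matrix} to the nonsingular M-matrix $-(\diag(\beta^F_1,\ldots,\beta^F_m)+Q^F)H_m$ to obtain $\eta\gg 0$ in $\R^m$ with
$$\bla := -\big(\diag(\beta^F_1,\ldots,\beta^F_m)+Q^F\big)H_m\,\eta\gg 0.$$
Set $\xi = H_m\eta$, so $\xi_k=\eta_k+\cdots+\eta_m$ and $\xi_1>\xi_2>\cdots>\xi_m>0$. Extend $\xi$ to $\S$ by $\hat\xi_j=\xi_k$ whenever $j\in F_k$, and take the Lyapunov candidate $f(x,j)=V(x)\hat\xi_j$ on $\R^d\times\S$.

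The heart of the proof is to verify $\mathscr{A}f(x,j)\leq 0$ for $|x|>r_0$ and every $j\in\S$. For $j\in F_k$,
$$Q\hat\xi(j)=\sum_{l\ne k}(\xi_l-\xi_k)\sum_{i\in F_l}q_{ji},$$
and the strict monotonicity of $\xi$ together with the definitions in (\ref{para-F})---sup for $l<k$ (positive factor $\xi_l-\xi_k$) and inf for $l>k$ (negative factor)---yield $Q\hat\xi(j)\leq Q^F\xi(k)$. Absolute convergence of the infinite sum follows from $\hat\xi\leq\xi_1$ and the conservativeness of $Q$. Combining with $\beta_j\leq\beta^F_k$ for $j\in F_k$ and the decomposition $\mathscr{A}f(x,j)=Q\hat\xi(j)V(x)+\hat\xi_j L^{(j)}V(x)$ gives, on $\{|x|>r_0\}$,
$$\mathscr{A}f(x,j)\leq\big(Q^F\xi(k)+\beta^F_k\xi_k\big)V(x)=-\lambda_kV(x)\leq 0.$$

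The sandwich $\xi_m V(x)\leq f(x,j)\leq\xi_1 V(x)$ then forces $f(x,j)\to 0$ (resp.\ $\to\infty$) uniformly in $j$ as $|x|\to\infty$ when $V\to 0$ (resp.\ $V\to\infty$). An optional-stopping argument applied to the nonnegative supermartingale $f(X_{t\wedge\tau},\La_{t\wedge\tau})$, with $\tau$ the exit time from an annular region, gives transience directly in the first case. In the second case one first shows $(X_t)$ returns to $\bar B_{r_0}$ almost surely, and then invokes the assumed recurrence of $(\La_t)$ to conclude recurrence of the joint process $(X_t,\La_t)$. This last synthesis is the main technical obstacle I anticipate: the usual Foster--Lyapunov drift criterion wants the supersolution region to be the complement of a compact set, whereas $\bar B_{r_0}\times\S$ is not compact when $\S$ is infinite, so the hypothesis that $(\La_t)$ itself is recurrent must be brought in separately to handle the ``vertical'' non-compactness.
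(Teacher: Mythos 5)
Your proposal is correct and follows essentially the same route as the paper: semipositivity of the M-matrix gives $\eta\gg 0$, the $H_m$-transform produces a strictly decreasing positive vector which is lifted to $\S$ through the partition, the sup/inf structure of $Q^F$ yields $Q\hat\xi(j)\le Q^F\xi^F(\mathscr I(j))$ and hence $\mathscr A f\le 0$ outside a ball, and the conclusion follows from optional stopping (on $\tau_{r_0}\wedge\tau_K$ for the recurrent case) together with the assumed recurrence of $(\La_t)$ to handle the non-compact fibre $\bar B_{r_0}\times\S$. The point you flag as the main anticipated obstacle is exactly how the paper closes the argument.
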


\begin{proof}
As $-\big(Q^F+\diag(\beta_1^F,\ldots,\beta_m^F)\big)H_m$ is a nonsingular M-matrix, by Proposition \ref{m-matrix}, there exists a vector $\mathbf{\eta}^F=(\eta_1^F,\ldots,\eta_m^F)^\ast\gg 0$ such that
\[\bla^F=(\lambda_1^F,\ldots,\lambda_m^F)^\ast=-\big(Q^F+\diag(\beta_1^F,\ldots,\beta_m^F)\big)H_m\eta^F\gg 0.\]
Hence, $\bar\lambda:=\max_{1\leq i\leq m}\lambda_i^F>0$. Set $\bxi^F=H_m\eta^F$.
Then
\[\xi_i^F=\eta_m^F+\cdots+\eta_i^F,\quad i=1,\ldots,m,\]
which implies that $\xi_{i+1}^F<\xi_i^F$ for $i=1,\ldots,m-1$, and $\bxi^F\gg 0$.
For each $j\in \S$, we define
$\xi_j=\xi_i^F$ if $j\in F_i$, which is reasonable as $(F_i)$ is a finite partition of $\S$. Via this method, we get a vector $\bxi=(\xi_1,\xi_2,\ldots)^\ast$ from $\bxi^F$.

Let $\mathscr I:\S\ra \{1,2,\ldots,m\}$ be a map defined by $\mathscr I(j)=k$ if $j\in F_k$. Let $Q_xg(i)=\sum_{j\neq i}q_{ij}(x)(g_j-g_i)$ for $g\in \mathscr B(\S)$. Set $f(x,r)=V(x)\xi_r$, $x\in \R^d$, $r\in \S$. By the definition of $(\beta^F_i)$ and $Q^F$, we obtain that for $r\in F_i$
\begin{align*}
Q\bxi(r)&=\sum_{j\neq r} q_{rj}(\xi_j-\xi_i)=\sum_{j\not\in F_i}q_{rj}(\xi_j-\xi_i)\\
&=\sum_{k<i}\big(\sum_{j\in F_k}  q_{rj}\big)(\xi_k^F-\xi_i^F)+\sum_{k>i}\big(\sum_{j\in F_k} q_{rj}\big)(\xi_k^F-\xi_i^F)\\
&\leq \sum_{k<i}q_{ik}^F(\xi_k^F-\xi_i^F)+\sum_{k>i}q_{ik}^F(\xi_k^F-\xi_i^F) =Q^F\bxi^F(\proj(r)).
\end{align*}
Moreover,
\begin{align*}
\mathscr A f(x,r)&=Q\bxi(r)V(x)+\xi_rL^{(r)}V(x)\\
&\leq \big(Q^F\bxi^F(\mathscr I(r))+\beta_{\proj(r)}^F\xi_{\proj(r)}^F\big)V(x)\\
&=-\lambda_{\proj(r)}V(x)\leq 0.
\end{align*}
As $(\La_t)$ is recurrent and (H-iv) holds, recurrence of  $(X_t,\La_t)$  is equivalent to the condition that $\p_{x,i}(\tau_{r_0}<\infty)=1$ for some $r_0>0$ and some $x\in \R^d$ with $|x|>r_0$ and some $i\in \S$.
Here
\begin{equation}\label{tau}
\tau_{r_0}:=\inf\big\{t>0; (X_t,\La_t)\in \{x\in\R^d;\ |x|\leq r_0\}\!\times\! \S\big\}.
\end{equation}
Applying It\^o's formula  to $(X_t,\La_t)$ with $(X_0,\La_0)=(x,l)$ satisfying $|x|>r_0$ (cf. \cite{Sko}), we obtain
\begin{equation}\label{ine-2}
\E f(X_{t\wedge\tau_{r_0}},\La_{t\wedge\tau_{r_0}})\!=\!f(x,l)\!+\!\E \int_0^{t\wedge\tau_{r_0}}\!\!\!\!\mathscr Af(X_s,\La_s)\d s
\!\leq\! f(x,l)\!=\!V(x)\xi_l.
\end{equation}

Firstly, consider the case $\lim_{|x|\ra \infty} V(x)=0$. If $\p(\tau_{r_0}<\infty)=1$, then passing $t$ to $\infty$ in (\ref{ine-2}), we get
\[\inf_{\{y:|y|\leq r_0\}} V(y)\leq \E V(X_{\tau_{r_0}})\leq \max_{i,k}\Big(\frac{\xi_i^F}{\xi_k^F}\Big)V(x),\]
as $|X_{\tau_{r_0}}|= r_0$.  We get $\inf_{\{y:|y|\leq r_0\}}V(y)>0$ by the compactness of set $\{y;|y|\leq r_0\}$ and positiveness of function $V$. So, letting $|x|$ tend to $\infty$ in previous inequality, the right hand goes to $0$, but the left hand is strictly bigger than a positive constant, which is a contradiction. Therefore, $\p(\tau_{r_0}<\infty)>0$, and the process $(X_t,\La_t)$ is transient.

Secondly, consider the case $\lim_{|x|\ra\infty} V(x)=\infty$. Introduce another stopping time
\[\tau_K=\inf\{t>0;\ |X_t|\geq K\}.
\]
As the process $(X_t,\La_t)$ is nonexplosive, $\tau_K$ increases to $\infty$ almost surely as $K\ra \infty$. It\^o's formula also yields that
\[\E[ V(X_{t\wedge\tau_K\wedge\tau_{r_0}})\xi_{\La_{t\wedge\tau_K\wedge\tau_{r_0}}}]\leq V(x)\xi_l.\]
Letting $t\ra \infty$, Fatou's lemma implies that
\[\E\big[V(X_{\tau_K\wedge\tau_{r_0}})]\leq \max_{i,k}\Big(\frac{\xi_i^F}{\xi_k^F}\Big)V(x),\]
and hence,
\[\p(\tau_{r_0}\geq \tau_K)\leq \max_{i,k}\Big(\frac{\xi_i^F}{\xi_k^F}\Big)\frac{V(x)}{\inf_{\{y;|y|=K\}} V(y)}.\]
Since $\lim_{|x|\ra \infty} V(x)=\infty$, letting $K\ra \infty$ in the previous inequality, we obtain that $\p(\tau_{r_0}=\infty)\leq 0$. We have completed the proof.
\end{proof}

As an application of Theorem \ref{t-3}, we construct an example of state-independent \rsp in an infinite state space and study its recurrent property.
\begin{exam}
Let $(\La_t)$ be a birth-death process on $\S=\{1,2,\ldots\}$ with $q_{ii+1}\equiv b>0$, $i\geq 1$, and $q_{ii-1}\equiv a>0$, $i\geq 2$. Assume $a\geq b$, then $(\La_t)$ is recurrent
(see \cite[Table 1.4, p.15]{Chen2}).
Let $X_t$ be a \rsp on $[0,\infty)$ with reflecting boundary at $0$  and satisfies
\[\d X_t=\beta_{\La_t}X_t\d t+\sqrt{2}\d B_t,
\]
where $\beta_i=\kappa-i^{-1}$  for $i\geq 1$.

First, set $V(x)=x$. Let us take the finite partition $F=\{F_1,F_2\}$ to be $F_1=\{1\}$ and $F_2=\{2, 3,\ldots\}$.
It is easy to see that $q_{12}^F=b$ and $q_{21}^F=a$.  Then \[ L^{(i)}V(x)= \beta_i V(x),\quad x>1, \ i\geq 1.\]
So $\beta_1^F=\kappa-1$, $\beta_2^F=\kappa $, and
\[-\big(Q^F+\diag(\beta_1^F,\beta_2^F)\big)H_2=\begin{pmatrix} b-\beta_1^F&-\beta_1^F\\ -a&-\beta^F_2\end{pmatrix}.\]
Applying Proposition \ref{m-matrix}, we get that previous matrix is a nonsingular M-matrix if and only if
\begin{equation}\label{e-e-1}
\kappa <\frac{a+b+1-\sqrt{(a+b+1)^2-4a}}{2}.
\end{equation}
Therefore, according to Theorem \ref{t-3}, if (\ref{e-e-1}) holds, the process $(X_t,\La_t)$ is recurrent.

Second, set $V(x)=x^{-1}$.  We still take $F_1=\{1\}$, and $F_2=\{2,3,\ldots\}$.
Then
\[L^{(i)}V(x)=(-\beta_i+2x^{-2})x^{-1}\leq (-\beta_i+2r_0^{-2})V(x),\quad \text{for}\ x>r_0.\]
Therefore, in this case, $\beta_1^F=-\kappa+1+2r_0^{-2}$ and $\beta_2^F=-\kappa+\frac 12+2r_0^{-2}$.
Set  \begin{equation}\label{e-e-2}\kappa_0=\begin{cases}
  1-b&\text{if}\ 2ab\leq 1-b,\\
  \frac{1-b-a+\sqrt{(a+b-1)^2+4a+2b-2}}{2}&\text{if}\ 2ab>1-b.
\end{cases} \end{equation}
If
$\kappa>\kappa_0$,
then there exist $r_0>0$ such that the
matrix $-\big(Q^F+\diag(\beta_1^F,\beta_2^F))H_2$ is a nonsingular M-matrix.
Consequently, Theorem \ref{t-3} yields that $(X_t,\La_t)$ is transient if $\kappa>\kappa_0$.
More precisely,
if we take $b=1$ and $a=2$, then $(\La_t)$ is exponentially ergodic, but $(X_t,\La_t)$ is transient if $\kappa>\sqrt{3}-1\approx 0.732$ and is recurrent if $\kappa<2-\sqrt{2}\approx0.586$.

Next, we divide the state space into three parts. Precisely, let $F=\{F_1, F_2, F_3\}$ with  $F_1=\{1\}$, $F_2=\{2\}$ and $F_3=\{3,4,\ldots\}$. Corresponding to this partition, we have
\[Q^F=\begin{pmatrix}
  -b&b&0\\ a&-(a+b)&b\\ 0&a&-a
\end{pmatrix}.\] By taking $V(x)=x$ again, we get that $\beta_1^F=\kappa-1$, $\beta_2^F=\kappa-\frac 12$, $\beta_3^F=\kappa$. Consider only the case $b=1$, $a=2$. By Theorem \ref{t-3}, if the matrix
\[-\big(\diag(\beta_1^F,\beta_2^F,\beta_3^F)+Q^F\big)H_3=
\begin{pmatrix}
  2-\kappa&1-\kappa&1-\kappa\\
  -2&\frac 32-\kappa &\frac 12-\kappa\\
  0&-2&-\kappa
\end{pmatrix}\]
is a nonsingular M-matrix, then the process $(X_t,\La_t)$ is recurrent. Applying Proposition \ref{m-matrix}, we obtain that if $\kappa<\frac{1}{4}(11-\sqrt{73})\approx 0.614$, then the previous matrix is a nonsingular M-matrix, and hence $(X_t,\La_t)$ is recurrent. This shows that the upper bound for recurrence of this process can be improved by dividing $\S$ into more pieces.
For the transience, we take $V(x)=x^{-1}$, then
\[L^{(i)}V(x)=(-\beta_i+2x^{-2})x^{-1}\leq (-\beta_i+2r_0^{-2})V(x),\quad \text{for}\ x>r_0.\]
We have $\beta_1^F=-\kappa+1+2r_0^2$, $\beta_2^F=-\kappa+\frac 12+2r_0^{-2}$, and $\beta_3^F=-\kappa+2r_0^{-2}$. Direct calculation yields that if $\kappa>\frac 14(\sqrt{17}-1)\approx 0.7807$,  the matrix $-\big(\diag(\beta_1^F,\beta_2^F,\beta_3^F)+Q^F\big)H_3$ is a nonsingular M-matrix, and hence $(X_t,\La_t)$ is transient due to Theorem \ref{t-3}. Unfortunately, this lower bound is bigger than $\sqrt 3-1$ obtained previously when we just divide $\S$ into two parts.
\end{exam}

Based on Example 2.1, we construct another example of state-dependent \rsp in a finite state space.
\begin{exam}
  Let $X_t$ satisfy the following SDE on $[0,\infty)$ with reflecting boundary at $0$,
  \[\d X_t=\beta_{\La_t}X_t\d t+\sqrt{2}\d B_t,\]
  where $\beta_1=\kappa-1$, $\beta_2=\kappa$, and $(\La_t)$ is a stochastic process on $\S=\{1,2\}$ satisfying
  \[q_{12}(x)=\frac{b(1+2x)}{1+x}, \quad q_{21}(x)=\frac{a(1+2x)}{2(1+x)},\quad x\geq 0,\]
  and  $q_{11}(x)=-q_{12}(x)$, $q_{22}(x)=-q_{21}(x)$. By \eqref{tilde-q}, it is easy to see that
  $\tilde q_{12}=b$ and $\tilde q_{21}=a$.   For the recurrence, we take $V(x)=x$, then
  \[L^{(i)}V(x)=\tilde \beta_i V(x),\quad x>1,\  i=1,2,\]
  where $\tilde \beta_1=\kappa-1$ and $\tilde \beta_2=\kappa$. Therefore, if (\ref{e-e-1}) holds, then $-(\tilde Q+\diag(\tilde \beta_1,\tilde \beta_2))H_2$ is a nonsingular M-matrix, and hence the process $(X_t,\La_t)$ is recurrent due to Theorem \ref{t-finite}.  For the transience, we take $V(x)=x^{-1}$, then
  \[ L^{(i)}V(x)=(-\beta_i+2x^{-2})x^{-1}\leq (-\beta_i+2r_0^{-2})V(x),\quad \text{for}\ x>r_0.\]
  Similar to the discussion in Example 2.1, we obtain that
  the process $(X_t,\La_t)$ is recurrent if  $\kappa>\kappa_0$, where $\kappa_0$ is given by \eqref{e-e-2}.
  \end{exam}

Next, we consider the Ornstein-Uhlenbeck type process with regime-switching, that is,  the process $(X_t,\La_t)$ satisfies:
\begin{equation}\label{2.1}
\d X_t=b_{\La_t}X_t\d t+\sigma_{\La_t}\d B_t,\quad X_0=x\in \R^d,
\end{equation}
where $\sigma_i$ is a $d\times d$ matrix, $b_i$ is a constant, $(B_t)$ is a Brownian motion in $\R^d$, and $(\La_t)$ is a continuous Markov chain on the space $\S=\{1,\ldots,N\}$ with $N<\infty$. Assume that $(\La_t)$, $(B_t)$ are mutually independent. The $Q$-matrix $(q_{ij})$ of $(\La_t)$ is independent of $(X_t)$, and is irreducible and conservative. We assume that the matrix $\sigma_i\sigma_i^\ast$ is positive definite for every $i\in \S$. Let $\mu=(\mu_i)$ be the invariant probability measure of $(\La_t)$. In \cite{GIY}, the authors showed that when $\sum_{i\in\S}\mu_ib_i<0$, the process $(X_t,\La_t)$ is ergodic in weak topology, that is, the distribution of $(X_t,\La_t)$ converges weakly to a probability measure $\nu$. In \cite{BGM, SY}, the tail behavior of $\nu$ was studied.
\begin{prop}\label{t-OU}
Let $(X_t,\La_t)$ be defined by (\ref{2.1}) with $\sigma_i\sigma_i^\ast$ being positive definite for each $i\in\S$.
If\, $\dis\sum_{i\in\S}\mu_ib_i<0$, then $(X_t,\La_t)$ is exponentially ergodic.
 If\, $\dis \sum_{i\in \S}\mu_ib_i>0$, then $(X_t,\La_t)$ is transient.
\end{prop}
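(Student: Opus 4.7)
The plan is to apply Theorem \ref{t-2} in both directions by selecting, for a small parameter $p>0$, the Lyapunov function $V_+(x)=(1+|x|^2)^{p/2}$ to handle exponential ergodicity, and $V_-(x)=(1+|x|^2)^{-p/2}$ to handle transience. Both functions lie in $C^2(\R^d)$, are strictly positive, and satisfy $\lim_{|x|\ra\infty}V_+(x)=\infty$ and $\lim_{|x|\ra\infty}V_-(x)=0$, matching the two cases in the conclusion of Theorem \ref{t-2}. Hypothesis (H) is automatic here: $b(x,i)=b_ix$ is linear and $\sigma(x,i)=\sigma_i$ is constant, the $Q$-matrix is $x$-independent (and the chain is irreducible on a finite set), and $a^{(i)}=\sigma_i\sigma_i^\ast$ is positive definite by assumption.

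Next I would compute, for the OU generator $L^{(i)}=\frac{1}{2}\sum_{k,l}a^{(i)}_{kl}\partial_k\partial_l+b_i x\cdot\nabla$, the ratio $L^{(i)}V_\pm(x)/V_\pm(x)$. Writing $V_\pm(x)=h(|x|^2)$ with $h(s)=(1+s)^{\pm p/2}$ and differentiating, one obtains after collecting terms
\[
\frac{L^{(i)}V_\pm(x)}{V_\pm(x)}=\pm p\,b_i\frac{|x|^2}{1+|x|^2}+R_\pm^{(i)}(x),
\]
where $R_\pm^{(i)}(x)$ is a sum of terms of the form $\mathrm{tr}(a^{(i)})/(1+|x|^2)$ and $x^\ast a^{(i)}x/(1+|x|^2)^2$ with bounded coefficients depending only on $p$. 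Since $N<\infty$, these remainders decay uniformly in $i$ as $|x|\ra\infty$. Thus for any $\epsilon>0$ there exists $r_0=r_0(\epsilon,p)>0$ with
\[
L^{(i)}V_\pm(x)\leq(\pm p\,b_i+\epsilon)V_\pm(x),\quad |x|>r_0,\ i\in\S,
\]
so condition (A1) is satisfied with $\beta_i=\pm p\,b_i+\epsilon$.

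Finally I would match signs. For the first claim, take $V=V_+$; then $\sum_{i\in\S}\mu_i\beta_i=p\sum_{i\in\S}\mu_ib_i+\epsilon$, which is strictly negative once $\epsilon<p\bigl|\sum_i\mu_ib_i\bigr|$, since $\sum_i\mu_ib_i<0$. As $V_+(x)\ra\infty$, Theorem \ref{t-2} yields exponential ergodicity. For the second claim, take $V=V_-$; then $\sum_{i\in\S}\mu_i\beta_i=-p\sum_{i\in\S}\mu_ib_i+\epsilon$, which is negative for small $\epsilon$ since $\sum_i\mu_ib_i>0$. As $V_-(x)\ra 0$, Theorem \ref{t-2} yields transience. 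There is no serious obstacle; the only mildly technical point is the elementary verification that the remainder $R_\pm^{(i)}$ decays, and the observation that the value of $p>0$ is immaterial---any fixed $p>0$ will do, provided $\epsilon$ is then chosen small enough relative to $p\,|\sum_i\mu_ib_i|$.
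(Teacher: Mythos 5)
Your proposal is correct and follows essentially the same route as the paper: apply Theorem \ref{t-2} with a power of $|x|$ tending to $\infty$ for the ergodic case and to $0$ for the transient case, absorbing the second-order remainder into an $\epsilon$ that is killed by taking $|x|$ large. The only cosmetic difference is that you use the smoothed functions $(1+|x|^2)^{\pm p/2}$ instead of the paper's $|x|$ and $|x|^{-\gamma}$, which makes $V$ genuinely $C^2$ at the origin and in fact treats the trace/quadratic-form remainder more carefully than the paper's displayed identity $L^{(i)}|x|=b_i|x|$.
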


\begin{proof}
By (\ref{2.1}), the generator $L^{(i)}$ of $(X_t^{(i)})$ is given
by \[L^{(i)}=\frac12 \sum_{k,l=1}^d a_{kl}^{(i)}\frac{\partial^2}{\partial x_k\partial x_l}+\sum_{k=1}^db_ix_k\frac{\partial}{\partial x_k},\]
where $a^{(i)}=\sigma_i\sigma_i^\ast$.
Take $V(x)=|x|$, then for each $i\in \S$,
\[L^{(i)}V(x)= b_i|x|,\ \text{for $|x|>1$}. \]
As $\dis\lim_{|x|\ra \infty}|x|=\infty$, by Theorem \ref{t-2}, we get $(X_t,\La_t)$ is exponentially ergodic if $\dis\sum_{i\in\S}\mu_ib_i<0$.

Now we take $V(x)=|x|^{-\gamma}$ with $\gamma>0$.
We have
\begin{align*}
L^{(i)}V(x)&=\frac{\gamma(\gamma+2)}{2}\sum_{k,l}a_{kl}^{(i)}|x|^{-\gamma-4}x_kx_l-\frac{\gamma}{2}\big(\sum_{k}a_{kk}^{(i)}\big)|x|^{-\gamma-2}-\gamma b_i|x|^{-\gamma}\\
&=|x|^{-\gamma}\Big(-\gamma b_i+\frac{\gamma(\gamma+2)}{2}|x|^{-4}\sum_{k,l}a_{kl}^{(i)}x_kx_l-\frac{\gamma}{2}|x|^{-2}\sum_{k}a_{kk}^{(i)}\Big),
\end{align*}
for $|x|>r_0>0$. When $r_0$ is sufficiently large, it is easy to see that
\[\frac{\gamma(\gamma+2)}{2}|x|^{-4}\sum_{k,l}a_{kl}^{(i)}x_kx_l-\frac{\gamma}{2}|x|^{-2}\sum_{k}a_{kk}^{(i)}\leq \frac{1}{r_0},\quad \forall\,|x|>r_0.\]
Therefore, we get
\begin{equation}\label{ine-3}
L^{(i)}V(x)\leq (-\gamma b_i+\frac{1}{r_0})V(x),\quad |x|>r_0.
\end{equation}
By Theorem \ref{t-2}, as $\dis\lim_{|x|\ra \infty}|x|^{-\gamma}=0$,
if \[ \sum_{i=1}^N\mu_i(-\gamma b_i+\frac{1}{r_0})=-\gamma\sum_{i=1}^N\mu_i b_i+\frac{1}{r_0}\leq 0,\]
then $(X_t,\La_t)$ is transient.
When $\sum_{i=1}^N\mu_ib_i>0$, we can always find a constant $r_0>0$ sufficiently large such that
$-\gamma\sum_{i=1}^N\mu_ib_i+\frac1{r_0}<0$. Hence, when $\sum_{i=1}^N\mu_ib_i>0$, $(X_t,\La_t)$ is transient.
\end{proof}

\section{Criteria for transience and recurrence: II}

According to Foster-Lyapunov drift condition for diffusion processes, if there exists a function $V\in C^2(\R^d)$ satisfying (A1) with $\beta_i\leq 0$ and $\lim_{|x|\ra \infty} V(x)=\infty$, then the diffusion process $(X_t^{(i)})$ is exponentially ergodic. When there is no diffusion process $(X_t^{(i)})$, $i\in\S$, being exponentially ergodic, we can not find suitable function $V\in C^2(\R^d)$ satisfying (A1), so the criteria introduced in Section 2 are useless for this kind of \rsp.  For example, the diffusion process corresponding to $L^{(i)}=\frac 12\Delta+|x|^\delta\hat b(x/|x|,i)\cdot\nabla$ with $\delta\in [0,1)$ is not exponentially ergodic. Therefore, to deal with this kind of processes, we need to extend our criteria established in Section 2. Let $(X_t,\La_t)$ be defined by (\ref{1.1}) and (\ref{1.2}) and $(X_t^{(i)})$ be the corresponding diffusion process in the fixed environment $i\in\S$ with the generator $L^{(i)}$.  Instead of finding one function $V$ satisfying condition (A1), we look for two functions $h$, $g\in C^2(\R^d)$ satisfying the following condition:
\begin{itemize}
\item[(A2)] There exists some constant $r_0>0$ such that for each $i\in \S$,
\begin{gather*}h(x),\ g(x)>0,\quad L^{(i)}h(x)\leq \beta_ig(x),\quad\forall\ |x|>r_0,\\
 \lim_{|x|\ra \infty}\frac{g(x)}{h(x)}= 0,\ \lim_{|x|\ra \infty} \frac{L^{(i)} g(x)}{g(x)}= 0.
\end{gather*}
\end{itemize}

\begin{thm}\label{t-4}
Let $(X_t,\La_t)$ be a state-independent \rsp defined by (\ref{1.1}) and (\ref{1.2}) with $N<\infty$. Assume (H) holds. Let $\mu$ be the invariant probability measure of the process $(\La_t)$. Suppose that there exist two functions $h,\,g\in C^2(\R^d)$ such that  (A2) holds and
\[\sum_{i=1}^N\mu_i\beta_i<0.\]
Then $(X_t,\La_t)$ is recurrent if\, $\dis\lim_{|x|\ra \infty} h(x)=\infty$ and is transient if $\dis\lim_{|x|\ra \infty} h(x)=0$.
\end{thm}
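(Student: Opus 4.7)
The plan is to adapt the Lyapunov-function strategy of Theorem~2.2 to the weaker hypothesis (A2). Under (A2) neither $h$ nor $g$ satisfies a self-contained drift inequality of the form $L^{(i)}V\leq\beta_iV$: the bound in (A2) strictly exchanges $h$ on the left for the smaller $g$ on the right, so the multiplicative ansatz $f(x,i)=V(x)^p\xi_i$ together with the Perron--Frobenius eigenvector of $Q+p\,\diag(\bbeta)$ cannot be reused. Instead I propose the affine ansatz
\[
f(x,i):=h(x)+\alpha_i\,g(x),\qquad (x,i)\in\R^d\times\S,
\]
where the vector $\boldsymbol{\alpha}=(\alpha_1,\ldots,\alpha_N)^\ast\in\R^N$ plays the role of the Perron--Frobenius eigenvector and is chosen so that its $Q$-image absorbs the non-mean part of $\bbeta$.

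Set $\bar\beta:=\sum_{i=1}^N\mu_i\beta_i<0$. Since $\mu^\ast Q=0$ the range of $Q$ is exactly $\{v\in\R^N:\mu^\ast v=0\}$, and $\mu^\ast(-\bbeta+\bar\beta\mathbf{1})=0$ by the very definition of $\bar\beta$. The Fredholm alternative therefore provides some $\boldsymbol{\alpha}\in\R^N$ with
\[
(Q\boldsymbol{\alpha})(i)+\beta_i=\bar\beta,\qquad i\in\S.
\]
This is the replacement for the Perron--Frobenius / M-matrix step in the proofs of Theorems~2.2--2.4. Applying the generator $\mathscr{A}$ of $(X_t,\La_t)$ to $f$ gives, as in the previous proofs,
\[
\mathscr{A} f(x,i)=(Q\boldsymbol{\alpha})(i)\,g(x)+L^{(i)}h(x)+\alpha_i L^{(i)}g(x).
\]
By (A2) and the finiteness of $\S$, for any $\delta>0$ there is $r_1\geq r_0$ such that $L^{(i)}h(x)\leq\beta_ig(x)$ and $|L^{(i)}g(x)|\leq\delta g(x)$ for all $i\in\S$ and $|x|>r_1$; combined with the choice of $\boldsymbol{\alpha}$,
\[
\mathscr{A} f(x,i)\leq\bigl(\bar\beta+|\alpha_i|\,\delta\bigr)g(x)\leq -\tfrac12|\bar\beta|\,g(x),
\]
once $\delta$ is so small that $\max_i|\alpha_i|\,\delta<|\bar\beta|/2$.

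Finally, the condition $g(x)/h(x)\to 0$ and positivity of $h$ together force $\tfrac12 h(x)\leq f(x,i)\leq\tfrac32 h(x)$ uniformly in $i\in\S$ for $|x|$ large, so $f>0$ on $\{|x|>r_1\}\times\S$ and $\lim_{|x|\to\infty}f(x,i)=\infty$ (resp.\ $=0$) iff $\lim_{|x|\to\infty}h(x)=\infty$ (resp.\ $=0$). Applying It\^o's formula to $f(X_t,\La_t)$ up to the stopping times $\tau_{r_0}$ and $\tau_K$ introduced in the proof of Theorem~2.4 (enlarging $r_0$ to $r_1$ if necessary so that $f$ stays positive on the relevant boundary) and repeating the two passages to the limit there yields recurrence of $(X_t,\La_t)$ when $\lim_{|x|\to\infty} h(x)=\infty$ and transience when $\lim_{|x|\to\infty} h(x)=0$. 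The main obstacle is the initial identification of the correct Lyapunov function: recognising that the purely multiplicative coupling $\xi_i h(x)$ leaves an uncontrollable leading term $Q\bxi(i)h(x)$, and that an additive correction $\alpha_i g(x)$ together with the Fredholm step above exactly cancels the $\mu$-mean-zero part of $\bbeta$, is the essential new ingredient; the remaining Foster--Lyapunov argument is then parallel to that of Theorem~2.4.
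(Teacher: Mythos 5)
Your proposal is correct and follows essentially the same route as the paper: the Fredholm alternative produces a vector $\boldsymbol{\alpha}$ with $Q\boldsymbol{\alpha}(i)+\beta_i=\sum_j\mu_j\beta_j$ (the paper writes this as $Q\bxi(i)=-\kappa-\beta_i$ with $\kappa=-\sum_j\mu_j\beta_j>0$), the Lyapunov function is the same affine ansatz $f(x,i)=h(x)+\alpha_i g(x)$, and the drift estimate and the Foster--Lyapunov conclusion are identical. Your explicit $\delta$-bookkeeping for $|L^{(i)}g|/g$ and the two-sided comparison $\tfrac12 h\leq f\leq\tfrac32 h$ merely make explicit what the paper leaves implicit.
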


\begin{proof}
As $\sum_{i=1}^N\mu_i\beta_i<0$, by the Fredholm alternative (see \cite[p.434]{PP}), we obtain that there exist a constant $\kappa>0$ and a vector $\bxi$ such that
\begin{equation}\label{3.1.1}Q\bxi(i)=-\kappa-\beta_i,\quad i\in\S.
\end{equation}
Set $f(x,i)=h(x)+\xi_i g(x)$.
We obtain
\begin{equation}\label{ine-4}
\begin{split}
\mathscr A f(x,i)&=L^{(i)} h(x)+\xi_i L^{(i)}g(x)+Q\bxi(i) g(x)\\
&\leq \Big(\beta_i +Q\bxi(i)+\xi_i\frac{L^{(i)}g(x)}{g(x)}\Big)g(x)
\end{split}
\end{equation} for large $|x|$.
By \eqref{3.1.1}, (\ref{ine-4}) and condition (A2), we get
\begin{equation}\label{ine-5}
\mathscr A f(x,i)\leq \Big(-\kappa+\xi_i\frac{L^{(i)}g(x)}{g(x)}\Big)g(x)\leq 0 \quad \text{for large $|x|$}.
\end{equation}
As $N<\infty$,  $\bxi$ is bounded. Since $\lim_{|x|\ra\infty} \frac{g(x)}{h(x)}=0$ and $f(x,i)=(1+\xi_i\frac{g(x)}{h(x)})h(x)$ for $|x|>r_0$, it is easy to see that there exists  $r_1>0$ such that $f(x,i)>0$ for $|x|>r_1$.
In addition, if $\lim_{|x|\ra \infty} h(x)=\infty$, then $\lim_{|x|\ra \infty} f(x,i)=\infty$; if $\lim_{|x|\ra\infty} h(x)=0$, then $\lim_{|x|\ra \infty} f(x,i)=0$. By the method of Lyapunov function, inequality (\ref{ine-5}) yields that $(X_t,\La_t)$ is recurrent if $\lim_{|x|\ra \infty} h(x)=\infty$ and is transient if $\lim_{|x|\ra \infty} h(x)=0$.
\end{proof}

We proceed to extend the previous criterion to deal with state-dependent \rsp in a finite state-space.

\begin{thm}\label{t-6}
Let $(X_t,\La_t)$ be a state-dependent \rsp in $\S$ with $N<\infty$. Assume (H) holds and there exist functions $h$, $g\in C^2(\R^d)$ such that (A2) holds. Let $\tilde Q$ be defined by (\ref{tilde-q}). Suppose there exists a positive nonincreasing function $\eta$ on $\S$ such that
$\beta_i+\tilde Q\eta(i)<0$ for every $i\in\S$. Then
$(X_t,\La_t)$ is recurrent if \,$\lim_{|x|\ra \infty}h(x)=\infty$ and is transient if \,$\lim_{|x|\ra \infty} h(x)=0$.
\end{thm}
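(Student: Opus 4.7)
The plan is to merge the strategy of Theorem \ref{t-4} (use two functions $h$ and $g$, and make the vector multiplier on $g$ absorb the drift $\beta_i$) with the monotone-coupling trick from the proof of Theorem \ref{t-finite} (use a nonincreasing $\eta$ to compare $Q_x$ with the state-independent $\tilde Q$).

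First I would establish the pointwise comparison
\[
Q_x\eta(i)\;\leq\;\tilde Q\eta(i),\qquad\forall\,x\in\R^d,\ i\in\S.
\]
Splitting $Q_x\eta(i)=\sum_{j<i}q_{ij}(x)(\eta_j-\eta_i)+\sum_{j>i}q_{ij}(x)(\eta_j-\eta_i)$, the monotonicity of $\eta$ gives $\eta_j-\eta_i\geq 0$ for $j<i$ (pair with $\tilde q_{ij}=\sup_x q_{ij}(x)\geq q_{ij}(x)$) and $\eta_j-\eta_i\leq 0$ for $j>i$ (pair with $\tilde q_{ij}=\inf_x q_{ij}(x)\leq q_{ij}(x)$), so each summand is bounded above by the corresponding term with $\tilde q_{ij}$, which is exactly the claim. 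This is the same sign bookkeeping that made the proof of Theorem \ref{t-finite} work.

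Next I would define the candidate Lyapunov function
\[
f(x,i)=h(x)+\eta_i\,g(x),\qquad |x|>r_0,\ i\in\S,
\]
and verify two things. Positivity: since $\S$ is finite, $\eta$ is bounded, and since $g/h\to 0$, there exists $r_1\geq r_0$ with $f(x,i)>0$ for all $|x|>r_1$ and all $i$. Drift: using $L^{(i)}h\leq\beta_i g$ from (A2) and the comparison above,
\begin{align*}
\mathscr A f(x,i)
&=L^{(i)}h(x)+\eta_i L^{(i)}g(x)+Q_x\eta(i)\,g(x)\\
&\leq\Bigl(\beta_i+\tilde Q\eta(i)+\eta_i\,\tfrac{L^{(i)}g(x)}{g(x)}\Bigr)g(x).
\end{align*}
Setting $\kappa=-\max_{i\in\S}(\beta_i+\tilde Q\eta(i))>0$ (finite positive because $\S$ is finite and the hypothesis gives strict inequality for every $i$), and using $L^{(i)}g/g\to 0$ uniformly on the finite state space together with the boundedness of $\eta$, I can choose $r_2\geq r_1$ so that $\mathscr Af(x,i)\leq -\tfrac{\kappa}{2}g(x)\leq 0$ for all $|x|>r_2$ and $i\in\S$.

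Finally, the Foster--Lyapunov argument used in the second half of the proof of Theorem \ref{t-3} carries over verbatim. Since $g/h\to 0$, the asymptotics of $f(x,i)$ are governed by $h(x)$: if $\lim_{|x|\to\infty}h(x)=\infty$ then $f(x,i)\to\infty$ uniformly in $i$, so It\^o's formula applied at $t\wedge\tau_{r_2}\wedge\tau_K$ together with Fatou's lemma gives $\p_{x,i}(\tau_{r_2}=\infty)=0$, proving recurrence; if $\lim_{|x|\to\infty}h(x)=0$ then $f(x,i)\to 0$, and passing $t\to\infty$ in $\E f(X_{t\wedge\tau_{r_2}},\La_{t\wedge\tau_{r_2}})\leq f(x,i)$ together with the strict positive lower bound of $f$ on the compact shell $\{|y|=r_2\}\times\S$ forces $\p_{x,i}(\tau_{r_2}<\infty)<1$ for $|x|$ large, proving transience.

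The one step that needs real care is the comparison $Q_x\eta\leq\tilde Q\eta$; everything else is a straightforward transcription of Theorems \ref{t-4} and \ref{t-3}. In particular, the hypothesis that $\eta$ be \emph{nonincreasing} is essential there, because it is what pairs the sign of $\eta_j-\eta_i$ with the correct one of $\sup$ or $\inf$ in the definition of $\tilde q_{ij}$.
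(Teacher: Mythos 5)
Your proposal is correct and follows essentially the same route as the paper: the comparison $Q_x\eta(i)\leq\tilde Q\eta(i)$ via the monotonicity of $\eta$ and the sup/inf structure of $\tilde Q$, the Lyapunov function $f(x,i)=h(x)+\eta_i g(x)$, and the drift bound $\mathscr Af(x,i)\leq(\beta_i+\tilde Q\eta(i)+\eta_i L^{(i)}g(x)/g(x))g(x)$, followed by the Foster--Lyapunov conclusion as in Theorems \ref{t-4} and \ref{t-3}. You simply spell out the details that the paper compresses into ``it is easy to see'' and ``the same deduction as in the proof of Theorem \ref{t-4}.''
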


\begin{proof}
  By the nonincreasing property of $\eta$ and the definition of $\tilde Q$, it is easy to see that
  \[Q_x\eta(i)\leq \tilde Q\eta(i),\quad x\in \R^d,\ i\in \S.\]
  Set $f(x,i)=h(x)+\eta_i g(x)$, $x\in \R^d$, $i\in \S$. We get
  \[\mathscr Af(x,i)\leq \big(\beta_i+\tilde Q\eta(i)+\eta_i\frac{L^{(i)} g(x)}{g(x)}\big)g(x).\]
  Then the desired result follows from the same deduction as in the proof of Theorem \ref{t-4}.
\end{proof}

Now we apply our second type criterion on recurrence to investigate the recurrent property of the process studied by \cite{PP}.  Let
\begin{equation}\label{4.1}
\d X_t=|X_t|^\delta\hat b(X_t/|X_t|,\La_t)\d t+\sigma(X_t,\La_t)\d B_t, \ \ X_0=x\in \R^d,\ d\geq 1,
\end{equation}
where $\delta\in [-1,1)$, $\hat b(\cdot,\cdot):S^{d-1}\times \S\ra \R^d$, $\sigma(\cdot,\cdot):\R^d\times \S\ra \R^{d\times d}$, and $(B_t)$ is a $d$-dimensional Brownian motion. Let $(\La_t)$ be a continuous time Markov chain on $\S$ with irreducible conservative $Q$-matrix $(q_{ij})$, which is independent of $(B_t)$. Let $\mu$ be the invariant probability measure of $(\La_t)$. Set $a^{(i)}(x)=\sigma(x,i)\sigma(x,i)^\ast$, which is assumed to be uniformly positive definite.
Suppose condition (H) is satisfied. In \cite{PP}, the authors considered the recurrent property of $(X_t,\La_t)$ under the condition
\[\sum_{i\in \S}\mu_i\hat b(\phi,i)=0,\quad \forall\,\phi\in S^{d-1}.
\]
In this section, we shall study the case $\sum_{i\in \S}\mu_i\hat b(\phi,i)\neq 0$.
\begin{thm}\label{t-5}
Assume that $\|a^{(i)}(\cdot)\|$ is bounded on $\R^d$ for every $i\in \S$. Let
\begin{equation}\label{beta-1}
\beta_i=\left\{\begin{array}{ll} \limsup\limits_{|x|\ra \infty} \sum\limits_{k=1}^d\hat b_k(\frac{x}{|x|},i)\frac{x_k}{|x|}, &\text{if $\delta\in (-1,1)$},\\
                       \limsup\limits_{|x|\ra \infty}\Big( \frac 12\sum_{k=1}^d a_{kk}^{(i)}(x)-\frac{\sum\limits_{k,l=1}^d\!a_{kl}^{(i)}x_kx_l}{2|x|^{2}}+\frac{\sum_{k=1}^d\hat b_k(\frac{x}{|x|},i) x_k}{|x| }\Big),& \text{if $\delta=-1$},
                       \end{array}\right.
\end{equation}
and
\begin{equation}\label{beta-2}
\tilde{\beta}_i=\left\{\begin{array}{ll} \liminf\limits_{|x|\ra \infty} \sum\limits_{k=1}^d\hat b_k(\frac{x}{|x|},i)\frac{x_k}{|x|}, &\text{if $\delta\in (-1,1)$},\\
      \liminf\limits_{|x|\ra \infty}\Big( \frac 12\sum_{k=1}^d a_{kk}^{(i)}(x)-\frac{\sum\limits_{k,l=1}^d\!a_{kl}^{(i)}x_kx_l}{2|x|^{2}}+\frac{\sum_{k=1}^d\hat b_k(\frac{x}{|x|},i) x_k}{|x| }\Big),& \text{if $\delta=-1$}.
                       \end{array}\right.
\end{equation}
If \,$\sum_{i\in\S}\mu_i\beta_i<0$, then $(X_t,\La_t)$ defined by (\ref{4.1}) is recurrent. If\, $\sum_{i\in \S}\mu_i\tilde \beta_i>0$, then $(X_t,\La_t)$ is transient.
\end{thm}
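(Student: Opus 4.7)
The plan is to invoke Theorem \ref{t-4} by constructing, in each of the four sub-cases (recurrence/transience paired with $\delta\in(-1,1)$ vs.\ $\delta=-1$), a couple of test functions $h,g\in C^2(\R^d)$ satisfying condition (A2) with an A2-constant vector whose $\mu$-average is strictly negative, together with $h(x)\to\infty$ for recurrence or $h(x)\to 0$ for transience.

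For $\delta\in(-1,1)$, the drift $|x|^\delta\hat b(\phi,i)\cdot\nabla$ dominates the diffusion part of $L^{(i)}$ when applied to radial power functions, because $\delta-1<0$. Taking $h(x)=|x|^{\epsilon}$ for some small $\epsilon>0$ and $g(x)=|x|^{\delta+\epsilon-1}$ yields $L^{(i)}h(x)/g(x)\to\epsilon\,\hat b(\phi,i)\cdot\phi$ as $|x|\to\infty$, where $\phi=x/|x|$; hence $\limsup L^{(i)}h/g\le\epsilon\beta_i$. Choosing the A2-constant as $\epsilon(\beta_i+\eta)$ for sufficiently small $\eta>0$ preserves $\sum_i\mu_i\epsilon(\beta_i+\eta)<0$, and the auxiliary items $g/h=|x|^{\delta-1}\to 0$ and $L^{(i)}g/g\to 0$ follow from the same power counting using $\delta<1$. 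With $h\to\infty$, Theorem \ref{t-4} delivers recurrence. The transient case is entirely symmetric: take $h(x)=|x|^{-\epsilon}$ and $g(x)=|x|^{\delta-\epsilon-1}$, obtain $\limsup L^{(i)}h/g\le -\epsilon(\tilde\beta_i-\eta)$, and deduce transience from $\sum_i\mu_i\tilde\beta_i>0$ together with $h\to 0$.

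For $\delta=-1$ the drift and the diffusion are of the same order on radial functions, so the $a^{(i)}$-contribution enters the definitions of $\beta_i,\tilde\beta_i$. For recurrence I would take $h(x)=|x|$ and $g(x)=|x|^{-1}$; direct computation gives
\[
L^{(i)}|x| \;=\; |x|^{-1}\Bigl[\tfrac12\sum_k a_{kk}^{(i)}(x) - \tfrac{\sum_{k,l}a_{kl}^{(i)}(x)x_k x_l}{2|x|^2} + \tfrac{\sum_k\hat b_k(\phi,i)x_k}{|x|}\Bigr],
\]
so $\limsup L^{(i)}h/g$ is precisely $\beta_i$, while $L^{(i)}g/g=O(|x|^{-2})\to 0$ follows from the uniform boundedness of $\|a^{(i)}(\cdot)\|$. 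For transience, take $h(x)=|x|^{-\epsilon}$ and $g(x)=|x|^{-\epsilon-2}$; the analogous computation produces a $\limsup$ that matches $-\epsilon\tilde\beta_i$ in the $\epsilon\to 0^+$ limit, up to a bounded $O(\epsilon)$ correction proportional to $T_i(x)=\sum_{k,l}a_{kl}^{(i)}(x)\phi_k\phi_l$. The boundedness of $\|a^{(i)}(\cdot)\|$ lets one select $\epsilon>0$ small enough that the corrected $\mu$-weighted sum is still strictly negative whenever $\sum_i\mu_i\tilde\beta_i>0$.

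The main technical obstacle is the clean handling of the $\delta=-1$ transient sub-case: the natural Lyapunov function $|x|^{-\epsilon}$ gives rise to a spurious $T_i$-contribution that is absent from the stated $\tilde\beta_i$, and reconciling this correction with the strict hypothesis requires carefully exploiting $\sup_{x}\|a^{(i)}(x)\|<\infty$ in tandem with the freedom to shrink $\epsilon$. Once condition (A2) is secured in each of the four sub-cases, the conclusion follows by a direct invocation of Theorem \ref{t-4}, with $h\to\infty$ giving recurrence and $h\to 0$ giving transience.
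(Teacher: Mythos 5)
Your overall strategy --- reducing to Theorem \ref{t-4} via radial power functions $h,g$ --- is exactly the paper's, and three of the four sub-cases go through. For $\delta\in(-1,1)$ your choices coincide with the paper's $h=|x|^{\pm\gamma}$, $g=|x|^{\pm\gamma+\delta-1}$. For the $\delta=-1$ recurrence sub-case your choice $h(x)=|x|$, $g(x)=|x|^{-1}$ is actually cleaner than the paper's $\gamma\downarrow 0$ argument: the Hessian of $|x|^{\gamma}$ contributes the radial quadratic form with coefficient $\frac{\gamma-2}{2}$, which at $\gamma=1$ equals $-\frac12$ and so reproduces the integrand of (\ref{beta-1}) exactly.

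The gap is in the $\delta=-1$ transience sub-case, and the fix you propose does not close it. For $h(x)=|x|^{-\epsilon}$ one has
\[
\frac12\sum_{k,l}a^{(i)}_{kl}(x)\,\partial_k\partial_l|x|^{-\epsilon}
=\frac{\epsilon(\epsilon+2)}{2}\,|x|^{-\epsilon-4}\sum_{k,l}a^{(i)}_{kl}(x)x_kx_l-\frac{\epsilon}{2}\,|x|^{-\epsilon-2}\sum_k a^{(i)}_{kk}(x),
\]
so after factoring out $-\epsilon\,g(x)=-\epsilon|x|^{-\epsilon-2}$ the bracket becomes
\[
\frac12\sum_k a^{(i)}_{kk}(x)-\frac{\epsilon+2}{2}\cdot\frac{\sum_{k,l}a^{(i)}_{kl}(x)x_kx_l}{|x|^{2}}+\frac{\sum_k\hat b_k(\frac{x}{|x|},i)x_k}{|x|},
\]
which differs from the integrand of $\tilde\beta_i$ in (\ref{beta-2}) by $-\frac{\epsilon+1}{2}\,|x|^{-2}\sum_{k,l}a^{(i)}_{kl}(x)x_kx_l$. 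This correction is \emph{not} $O(\epsilon)$: as $\epsilon\downarrow0$ it converges to $-\frac12|x|^{-2}\sum_{k,l}a^{(i)}_{kl}(x)x_kx_l$, and by the uniform positive definiteness of $a^{(i)}$ it is bounded away from zero. Hence $\sum_i\mu_i\tilde\beta_i>0$ does not force the $\mu$-average of the liminf of the bracket to be positive, and no shrinking of $\epsilon$ repairs this. You have in fact put your finger on a soft spot of the paper itself: display (\ref{ine-7}) carries the coefficient $-(\gamma+1)$ where the Hessian of $|x|^{-\gamma}$ actually gives $-(\gamma+2)$ (compare the correct computation in the proof of Proposition \ref{t-OU}), and the sub-case is then dismissed with ``similar to step (1)''. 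To genuinely obtain the stated transience criterion at $\delta=-1$ you would need either a different Lyapunov pair or a hypothesis strengthened so as to absorb the residual $-\frac12|x|^{-2}\langle a^{(i)}(x)x,x\rangle$ term; as written, your argument (like the paper's) only yields transience under the correspondingly corrected version of $\tilde\beta_i$.
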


\begin{proof}
For the recurrence, set $h(x)=|x|^\gamma$, $\gamma>0$, and $g(x)=|x|^{\gamma+\delta-1}$.
Then it holds that
\[\lim_{|x|\ra \infty} \frac{g(x)}{h(x)}=0,\quad \lim_{|x|\ra\infty}\frac{L^{(i)} g(x)}{g(x)}=0.\]
By direct calculation we get
\begin{equation}\label{ine-6}
L^{(i)} h(x)=\Big[(\gamma-1)\frac{\sum_{k,l=1}^d a_{kl}^{(i)}(x)x_kx_l}{2|x|^{\delta+3}}+\frac{\sum_{k=1}^da_{kk}^{(i)}(x)}{2|x|^{\delta+1}}+\frac{\sum_{k=1}^d \hat b_k(\frac{x}{|x|},i)x_k}{|x|}\Big]\gamma g(x).
\end{equation}

When $\delta\in (-1,1)$,
\[L^{(i)}h(x)=\Big[O(|x|^{-\delta-1})+\frac{\sum_{k=1}^d \hat b_k(\frac{x}{|x|},i)x_k}{|x|}\Big]\gamma g(x),\]
which implies that if $\sum_{i\in\S}\mu_i\beta_i<0$, then there exists $r_0>0$ such that
\[\sum_{i\in \S}\mu_i\Big(O(|x|^{-\delta-1})+\frac{\sum_{k=1}^d \hat b_k(\frac{x}{|x|},i)x_k}{|x|}\Big)<0,\quad \text{for $|x|>r_0$.}\]
Applying Theorem \ref{t-4}, as $\gamma>0$, we obtain that $(X_t,\La_t)$ is recurrent if $\sum_{i\in\S}\mu_i\beta_i<0$.

When $\delta=-1$,
it holds
\[\limsup_{|x|\ra \infty}\lim_{\delta\downarrow 0}(\gamma-1)\frac{\sum_{k,l=1}^d a_{kl}^{(i)}(x)x_kx_l}{2|x|^{\delta+3}}+\frac{\sum_{k=1}^da_{kk}^{(i)}(x)}{2|x|^{\delta+1}}+\frac{\sum_{k=1}^d \hat b_k(\frac{x}{|x|},i)x_k}{|x|}=\beta_i.\]
Therefore, if $\sum_{i\in\S}\mu_i\beta_i<0$, by choosing $\gamma>0$ sufficiently small and $r_0>0$ sufficiently large, we can use Theorem \ref{t-4} to show that $(X_t,\La_t)$ is recurrent.

For the transience,  set $h(x)=|x|^{-\gamma}$ and $g(x)=|x|^{-\gamma+\delta-1}$ for $\gamma>0$.
Then it still holds
\[\lim_{|x|\ra \infty} \frac{g(x)}{h(x)}=0,\quad \lim_{|x|\ra\infty}\frac{L^{(i)} g(x)}{g(x)}=0,\]
and
\begin{equation}\label{ine-7}
\begin{split}
&L^{(i)} h(x)=\Big[-(\gamma+1)\frac{\sum_{k,l=1}^d a_{kl}^{(i)}(x)x_kx_l}{2|x|^{\delta+3}}\\ &\hspace{2cm} +\frac{\sum_{k=1}^da_{kk}^{(i)}(x)}{2|x|^{\delta+1}}+\frac{\sum_{k=1}^d \hat b_k(\frac{x}{|x|},i)x_k}{|x|}\Big](-\gamma) g(x).
\end{split}
\end{equation}
Note that it is $-\gamma<0$ before $g(x)$ in above equality. Similar to the argument in step (1), we can conclude the proof.
\end{proof}

When the dimension $d$ is equal to 1, we can obtain a complete criterion presented as follows.
\begin{cor}\label{t-delta}
Let $(X_t,\La_t)$ be a regime-switching diffusion on $[0,\infty)$ with reflecting boundary at $0$, where $(X_t)$ satisfies
\[\d X_t=b_{\La_t}X_t^\delta\d t+\sigma_{\La_t}\d B_t, \quad \delta\in [-1,1),\]
where $b_i,\,\sigma_i$ are constants for $i$ in a finite set $\S$. $(\La_t)$ is a continuous time Markov chain on $\S$ independent of $(B_t)$. Then $(X_t,\La_t)$ is recurrent if and only if \,$\sum_{i\in\S} \mu_ib_i\leq 0$.
\end{cor}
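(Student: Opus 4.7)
I will separate the argument according to whether $\sum_i \mu_i b_i \neq 0$, in which case Theorem~\ref{t-5} applies directly, or $\sum_i \mu_i b_i = 0$, in which case a refined Lyapunov construction is needed.

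\textbf{Non-critical case.} The one-dimensional SDE on $[0,\infty)$ with reflecting boundary at $0$ is of the form (\ref{4.1}) with $\hat b(1,i) = b_i$ and $\sigma(x,i) = \sigma_i$. A direct substitution into (\ref{beta-1})--(\ref{beta-2}) gives $\beta_i = \tilde\beta_i = b_i$ for every $i \in \S$: for $\delta \in (-1,1)$ this is immediate, and for $\delta = -1$ the two trace terms $\tfrac{1}{2}\sum_k a^{(i)}_{kk}(x)$ and $\tfrac{1}{2|x|^2}\sum_{k,l} a^{(i)}_{kl} x_k x_l$ cancel exactly because $a^{(i)}(x) = \sigma_i^2$ is scalar. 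Theorem~\ref{t-5} then yields recurrence when $\sum_i \mu_i b_i < 0$ and transience when $\sum_i \mu_i b_i > 0$; the latter statement provides the ``only if'' direction by contraposition.

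\textbf{Critical case $\sum_i \mu_i b_i = 0$.} The naive choice $h(x) = \log x$ together with $g(x) = x^{\delta-1}$ in Theorem~\ref{t-4} produces $\beta_i = b_i$ and hence $\sum_i \mu_i \beta_i = 0$, which just fails the hypothesis. I plan to circumvent this with a corrected test function built via the Fredholm alternative: pick $\xi = (\xi_i)$ solving $Q\xi(i) = -b_i$ (available thanks to $\sum_i \mu_i b_i = 0$) and set $f_1(x,i) = \log x + \xi_i x^{\delta-1}$. Direct calculation gives
\[
\mathscr A f_1(x,i) = -\frac{\sigma_i^2}{2 x^2} + \xi_i(\delta-1) b_i\, x^{2\delta-2} + \xi_i\frac{(\delta-1)(\delta-2)}{2}\sigma_i^2\, x^{\delta-3},
\]
the leading $b_i x^{\delta-1}$-contributions having cancelled. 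The behaviour of the residual depends on $\delta$: for $\delta \in (-1,0)$ the $x^{-2}$-term dominates and is already pointwise negative in $i$, so $\mathscr A f_1 \leq 0$ for large $x$ and Foster-Lyapunov applies directly to $f_1$. For $\delta \in [0,1)$ the dominant residual coefficient is not pointwise non-positive, but its $\mu$-average is strictly negative: indeed $(\delta-1)\sum_i \mu_i \xi_i b_i = -(\delta-1)\sum_i \mu_i \xi_i Q\xi(i) \leq 0$ by the Dirichlet-form inequality $-\sum_i \mu_i \xi_i Q\xi(i) \geq 0$, and $-\tfrac{1}{2}\sum_i \mu_i \sigma_i^2 < 0$ by uniform positivity of $\sigma_i^2$; strictness for $\delta \in (0,1)$ uses that $\xi$ is nonconstant whenever $b \not\equiv 0$ (the degenerate sub-case $b \equiv 0$ gives a regime-switching Brownian motion on $[0,\infty)$ with reflection, which is trivially recurrent). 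A second Fredholm step then supplies an additional correction $\eta_i k(x)$, with $k(x)$ at the order of the dominant residual term, so that $\mathscr A(f_1 + \eta_i k)(x,i) \leq -c x^{-\alpha}$ for some $c > 0$ and all large $x$; since $f_1 + \eta_i k \to +\infty$ as $x \to \infty$, Foster-Lyapunov concludes. The case $\delta = -1$ is handled separately and more easily: $h(x) = \log x$, $g(x) = x^{-2}$ directly satisfy the hypothesis of Theorem~\ref{t-4} with $\beta_i = b_i - \tfrac{1}{2}\sigma_i^2$ and $\sum_i \mu_i \beta_i = -\tfrac{1}{2}\sum_i \mu_i \sigma_i^2 < 0$.

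\textbf{Main obstacle.} The delicate piece is the critical case with $\delta \in [0,1)$: the dominant residual term of $\mathscr A f_1$ is not pointwise non-positive in $i$, and one must invoke the Fredholm alternative a second time, carefully matching the scale $x^{-\alpha}$ of the correction $\eta_i k(x)$ to the dominant residual scale (which is $x^{-2}$ at $\delta = 0$ and $x^{2\delta-2}$ for $\delta \in (0,1)$). The strict negativity of the $\mu$-averaged coefficient ultimately relies on the Dirichlet-form nonnegativity associated to the generator $Q$ of $(\La_t)$, together with the uniform positivity of $\sigma_i^2$.
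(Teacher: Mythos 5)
Your proposal is correct, and its skeleton coincides with the paper's: reduce to the critical case $\sum_{i}\mu_ib_i=0$ via Theorem \ref{t-5} (which also supplies the ``only if'' direction through transience when $\sum_i\mu_ib_i>0$), and then, in the critical case, construct a Lyapunov function tending to infinity whose leading drift contribution is cancelled by a Fredholm-alternative correction, splitting into cases according to $\delta$. The differences are minor but worth recording. The paper takes $x^p$ with small $p>0$ as the leading term, with corrections $-p(Q^{-1}b)(i)x^{p-1+\delta}$ and $c_i x^{p-2+2\delta}$; you take $\log x$ with the correction $\xi_i x^{\delta-1}$, $Q\xi=-b$, which is in effect the $p\downarrow 0$ limit of the same construction and slightly simplifies the bookkeeping (for $\delta\in(-1,0)$ the residual $-\sigma_i^2/(2x^2)$ is already pointwise negative and no second correction is needed, matching the paper's Case 2; your separate treatment of $\delta=-1$ via Theorem \ref{t-4} with $h=\log x$, $g=x^{-2}$ is also fine). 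Where the paper cites \cite{PP} for $\sum_i\mu_i b_i(Q^{-1}b)(i)<0$, you rederive the equivalent inequality $\sum_i\mu_i\xi_ib_i=-\sum_i\mu_i\xi_iQ\xi(i)\geq 0$ from the nonnegativity of the Dirichlet form of $Q$ with respect to $\mu$, with strictness exactly when $b\not\equiv 0$; this is the same fact, and your explicit handling of the degenerate subcase $b\equiv 0$ (left implicit in the paper, where the cited strict inequality actually fails) is a small point in your favour. The one step you leave as a sketch, the second Fredholm correction for $\delta\in[0,1)$, goes through as you set it up: with $k(x)=x^{2\delta-2}$ (i.e.\ $x^{-2}$ at $\delta=0$) the term $Q\eta(i)k(x)$ matches the dominant residual scale while $L^{(i)}k(x)$ is of strictly lower order, so one is left with $-\kappa x^{2\delta-2}+o(x^{2\delta-2})\leq 0$ and a Lyapunov function still tending to infinity, reproducing the paper's Cases 1 and 3.
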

\begin{proof}
By taking $h(x)$ and $g(x)$ as in the Theorem \ref{t-5}, it is easy to check that $\beta_i=\tilde \beta_i=b_i$.  So according to Theorem \ref{t-5}, $(X_t,\La_t)$ is recurrent if $\sum_{i\in \S}\mu_ib_i<0$ and is transient if $\sum_{i\in\S}\mu_ib_i>0$. Therefore, we only need to consider the case $\sum_{i\in\S}\mu_ib_i=0$. To deal with this situation, we have to consider it separately according to the range of $\delta$.
Note that it holds $\sum_{i\in\S}\mu_i b_i(Q^{-1}b)(i)<0$ as $\sum_{i\in\S}\mu_ib_i=0$ (cf. \cite{PP}).

Case 1:\ $\delta\in (0,1)$.
For $p>0$, set \[f(x,i)=x^p-p(Q^{-1}b)(i)x^{p-1+\delta}+c_ix^{p-2+2\delta},\]
where the vector $(c_i)$ would be determined later.
By noting that $\delta\in (0,1)$, we obtain
\begin{align*}
\mathscr Af(x,i)&=\big[-p(p-1+\delta)b_i(Qb)(i)+Qc(i)\big] x^{p-2+2\delta}+o(x^{p-2+2\delta}).
\end{align*}
Take $p\in (0,1-\delta)$, then
$\sum_{i\in \S}p(p-1+\delta)\mu_i b_i(Q^{-1}b)(i)>0$. By the Fredholm alternative, there exist a constant $\beta>0$ and a vector $(c_i)$ such that
$Qc(i)=p(p-1+\delta)b_i(Q^{-1}b)(i)-\beta$. Choosing these $p$ and $(c_i)$, we have
$\mathscr Af(x,i)=-\beta x^{p-2+2\delta}+o(x^{p-2+2\delta})$. As $\lim_{|x|\ra \infty} f(x,i)=\infty$ for each $i\in \S$, we obtain that $(X_t,\La_t)$ is recurrent when $\sum_{i\in\S}\mu_ib_i=0$ and $\delta\in (0,1)$.

Case 2:\ $\delta\in [-1,0)$.  In this situation, we take $f(x,i)=x^p-p(Q^{-1}b)(i) x^{p-1+\delta}$. Then
\begin{align*}\mathscr A f(x,i)&=\frac 12 \sigma_i^2p(p\!-\!1)x^{p-2}\!-\!p(Q^{-1}b)(i)\big[\frac 12 (p\!-\!1\!+\!\delta)(p\!-\!2\!+\!\delta)\sigma_i^2x^{p-3+\delta}\\ &\quad +\!b_i(p\!-\!1\!+\!\delta)x^{p-2+2\delta}\big]\\
&=\frac 12\sigma_i^2p(p-1)x^{p-2}+o(x^{p-2}).
\end{align*}
By setting $p\in (0,1)$, we have $\lim_{x\ra \infty}f(x,i)=\infty$ and $\mathscr A f(x,i)\leq 0$. Hence, $(X_t,\La_t)$ is recurrent.

Case 3:\ $\delta=0$. We take $f(x,i)=x^p-p(Q^{-1}b)(i) x^{p-1}+c_i x^{p-2}$. Then
\begin{align*}
\mathscr Af(x,i)&=\big[\frac 12\sigma_i^2 p(p-1)-p(p-1)b_i(Q^{-1}b)(i)+Qc(i)\big]x^{p-2}+o(x^{p-2}).
\end{align*}
Putting $p\in (0,1)$, as
$p(p-1)\sum_{i\in \S}\mu_i\big(\sigma_i^2-b_i(Q^{-1}b)(i)\big)<0$, there exist a vector $(c_i)$ and a positive constant $\beta$ such that
\[Qc(i)+\frac 12\sigma_i^2 p(p-1)-p(p-1)b_i(Q^{-1}b)(i)=-\beta<0.\]
Therefore, we get $\mathscr Af(x,i)\leq 0$ and $\lim_{x\ra\infty}f(x,i)=\infty$, which implies that $(X_t,\La_t)$ is recurrent. We complete the proof.
\end{proof}

\end{document}